\pgfplotsset{compat=newest}
\pgfplotsset{plot coordinates/math parser=false}
\newcommand{\cboxxxs}[1]{\fboxsep0.5pt \colorbox{white}{#1}}
\newcommand{\cboxs}[1]{\fboxsep1.0pt \colorbox{white}{#1}}
\newtheorem{theorem}{\bf Theorem}[section]
\newtheorem{lemma}{\bf Lemma}[section]
\newtheorem{corollary}{\bf Corollary}[section]
\theoremstyle{definition}
\newtheorem{definition}{ \bf Definition}[section]
\theoremstyle{remark}
\newtheorem{remark}{\bf Remark }[section]
\begin{document}
	\title{Explicit Backbone Curves from Spectral Submanifolds of Forced-Damped Nonlinear Mechanical Systems}
	\author{Thomas Breunung}
	\author{George Haller \footnote{Corresponding author. Email: georgehaller@ethz.ch } }
	\affil{Institute for Mechanical Systems, ETH Z{\"u}rich, \protect\\  Leonardstrasse 21, 8092 Z{\"u}rich, Switzerland}
	\maketitle
\begin{abstract}
	Spectral submanifolds (SSMs) have recently been shown to provide exact and unique reduced-order models for nonlinear unforced mechanical vibrations. Here we extend these results to periodically or quasiperiodically forced mechanical systems, obtaining analytic expressions for forced responses and backbone curves on modal (i.e. two-dimensional) time dependent SSMs. A judicious choice of the parameterization of these SSMs allows us to simplify the reduced dynamics considerably. We demonstrate our analytical formulae on three numerical examples and compare them to results obtained from available normal form methods. 
\end{abstract}

	\section{Introduction}
		In drawing conclusions about a nonlinear mechanical system, an engineering analyst usually faces the challenge of high dimensionality and complex dynamic equations. To reduce simulation time and deduce general statements, it is desirable to reduce the dimension of the system and simplify the resulting reduced equations of motion.

	For linear systems, decomposition into normal modes is a powerful tool to derive reduced-order models. While the lack of the superposition principle makes such a decomposition impossible for nonlinear systems, various definitions of nonlinear normal modes are also available in the literature~(cf.~Rosenberg~\cite{ROSENBERG_NNM}, Shaw and Pierre~\cite{SHAW+Pierre} and Haller and Ponsioen~\cite{article:Haller_SSMbasics}). Specifically, Rosenberg~\cite{ROSENBERG_NNM} defines a nonlinear normal mode as a synchronous periodic orbit of a conservative system. Later Shaw and Pierre~\cite{SHAW+Pierre} extended this definition to dissipative systems, by viewing a nonlinear normal mode as an invariant manifold tangent to a modal subspace of an equilibrium point.  Sought in practice via a Taylor expansion, these manifolds serve as nonlinear  continuations of the invariant  modal subspaces spanned by the  eigenvectors of the linearized system. Due to their invariance, these manifold are natural candidates for model order reduction. 

While there are generally infinitely many Shaw-Pierre type surfaces for each modal subspace (cf. Neild~et~al.~\cite{Neild_2ndNF_overview}), Haller and Ponsioen~\cite{article:Haller_SSMbasics} have shown that, under appropriate non-resonance conditions, there is a unique smoothest one, which they called a spectral submanifold (SSM). When the underlying modal subspace is the one with the slowest decay, the dynamics on its corresponding SSM serves as the optimal, mathematically exact reduced-order model for the system dynamics (see Haller and Ponsioen~\cite{article:Haller_SSMbasics}).  Applications of this model reduction approach appear in Jain~et~al.~\cite{Jain_SFD_and_SSM} and  Szalai~et~al.~\cite{article:SSM_SysID}. Ponsioen et al.~\cite{Sten_auto_SSM} provide an automated computation package for two-dimensional SSMs of a general autonomous, nonlinear mechanical system.

While most of the above work focuses on unforced (autonomous) mechanical systems, here we explore further the utility of SSMs for forced dissipative nonlinear mechanical systems. For this class of systems, the existence, uniqueness and regularity of SSMs has been clarified by Haller and Ponsioen~\cite{article:Haller_SSMbasics}, relying on the more abstract invariant manifold results of Haro and de la Lave~\cite{HARO_SSM}. In this context, a nonlinear normal mode (NNM) is defined as the continuation of the trivial hyperbolic fixed point of the time-independent system under the addition of small time-dependent forcing with a finite number of frequencies. Depending on the frequency content of the time-varying terms, this continuation is a periodic or quasi-periodic orbit~(cf.~Haller~and~Ponsioen~\cite{article:Haller_SSMbasics}). The SSM will be  a time-dependent surface with the same frequency basis.  This SSM is then tangent to the NNM along directions associated with a spectral subspace of the linearization. 

The first attempt to construct such a non-autonomous SSM can be found in Jiang et al.~\cite{Jiang_NNM}, who formally reduce an externally forced, dissipative mechanical system to a two-dimensional time-varying invariant manifold. While their results are promising even for high amplitude oscillations, they are only able to carry out the reduction numerically for fixed parameter values, aided by a Galerkin projection. Therefore, their study is limited to specific examples and symbolic equations from which general conclusions about the forced response could be derived,  are not obtained. Furthermore, the uniqueness, existence and smoothness of their assumed invariant manifold remains unclear from their procedure.  

Extending this approach to systems with time-periodic coefficients in their linear part, Sinha et al.~\cite{sinha2005order} and Gabale and Sinha~\cite{Gabale_SSM} expand the assumed invariant manifold in a multivariate Taylor-Fourier series, obtaining the unknown coefficients from the invariance of the manifold. With unclear uniqueness, existence and smoothness properties of the manifold, however, the series expansion remains unjustified. Furthermore, the approach does not yield generally applicable closed formulas and hence numerical integration is required to analyze the reduced model.

A generally applicable procedure for the simplification of the (formally) reduced dynamics is the method of normal forms (cf. e.g. Guckenheimer and Holmes~\cite{guckenheimer_NL}). The method applies a series of smooth transformations to obtain a Taylor series of the original dynamical equations, which contain only the terms essential for the dynamics. Jezequel and Lamarque~\cite{JEZEQUEL} demonstrate the potential of normal forms for mechanical vibrations after the system is transformed to first-order phase-space form. Neild and Wagg~\cite{Neild_2NF_firstresult} give an alternative formulation of the normal form procedure that is directly applicable to second-order mechanical systems. Since all state variables are transformed, the resulting dynamics have the same dimensionality as the original system and no model-order reduction is achieved. Furthermore, both of these normal form approaches start from conservative systems and treat damping as a small bifurcation parameter. Therefore, the unfolding from the conservative limit has to be discussed for every damping type separately.

Touz{\'e} and Amabili~\cite{touze_damped_NNM} seek to unite normal form theory with model-order reduction for the first time. After a normal form transformation, they restrict their calculations to heuristically chosen submanifolds. As pointed out by the authors, a strict time-varying normal form is not computed. Instead, the forcing is inserted directly into the normal form. This represents phenomenological forcing aligned with a curvilinear coordinates, rather than specific physical forcing applied to the system.

Due to the essential nonlinear relationship between forcing and response amplitude of nonlinear systems, a single response curve for a given forcing is  meaningless for different forcing amplitudes. To summarize responses obtained from different forcing amplitudes, one may choose to collect distinguished points of various response curves in the same diagram. For, instance,  Nayfeh and Mook~\cite{nayfeh_nonlinear} and Cveticanin~et al.~\cite{cveticanin2017dynamics} call the curve formed by the loci of the maximal response amplitude the backbone curve. Cveticanin~et al.~\cite{cveticanin2017dynamics} further trace fold points of the forced response and relate them to the maximum amplitude. Both Nayfeh and Mook~\cite{nayfeh_nonlinear} and Cveticanin~et al.~\cite{cveticanin2017dynamics}, however compute the backbones curves only for low dimensional specific examples. Furthermore, Peeters~et~al.~\cite{peeters_backbone} trace the frequencies at which the forced response is 90 degree out of phase to the forcing. 

An alternative given by Klotter~\cite{Klotter} and continued by Rosenberg and Atkinson~\cite{rosenberg1959natural} is the definition of the backbone curve as the frequency-amplitude relationship of a periodic solution family of the conservative unforced limit of the system. Additional arguments are necessary to justify the relevance of these curves for forced-damped vibrations. Hill~et~al.~\cite{Hill_interp_via_Bbk,hill2017identifying}, Kerschen~et~al.~\cite{Kerschen_BasicNNM} and Peeters~et~al.~\cite{peeters_backbone} observe that along each nonlinear normal mode (i.e. periodic orbit) of the conservative limit, weak viscous damping can be cancelled by appropriately chosen external periodic forcing.  Under such forcing, the conservative set of nonlinear normal modes will form the backbone curves. For a general damped and forced nonlinear system however, the relevance of periodic orbits of the conservative limit for the forced response is not well understood. Recently, Hill~et~al.~\cite{hill2017identifying} observed numerically that major parts of such nonlinear normal modes are non-robust and therefore irrelevant for the forced response. They propose a robustness measure to assess the relevance of the conservative nonlinear normal modes for the forced response. Kerschen~et~al.~\cite{Kerschen_BasicNNM} and Peeters~et~al.~\cite{peeters_backbone} mention specific examples in which the forced response of an almost conservative system will be close to the periodic orbits of the conservative system. Since the backbone curve is obtained for the unforced conservative limit in these examples, another method is needed to actually calculate the maximum amplitude for a given forcing. Hill~et~al.~\cite{Hill_interp_via_Bbk} present an energy-transfer-based method for this purpose. They also give, however, a counterexample in which the conservative backbone curve has no relevance for the forced response. 

Parallel to theoretical considerations, backbone curves have been approximated in experiments through the \textit{force appropriation method}. In this method, the  nonlinear system is forced with a harmonic forcing such that the response has a 90-degree phase lag in a modal degree of freedom.  While this force appropriation procedure is plausible for linear viscous damping (or nonlinear damping that is an odd function of the velocities), the approach has remained unjustified for general, nonlinear damping (cf. Peeters~et~al.~\cite{peeters_backbone}). 

An experimental alternative to the force appropriation is the \textit{resonance decay} method, in which the system is forced, such that its response is close to an envisioned invariant surface of the conservative limit. Then the forcing is turned off and the instantaneous amplitude-frequency relationship is identified by signal processing.  Peeters et al.~\cite{peeters_backbone}, however, relate this curve, which is essentially a feature of the damped system, to the orbits of the conservative system  only phenomenologically. 

We also note that force appropriation and the resonance decay aim to reconstruct nonlinear normal modes of the conservative limit. The set formed by these orbits is expected to deviate from the forced response of the actual dissipative system for lager amplitudes and larger damping. As a recent development,  Szalai~et~al.~\cite{article:SSM_SysID} compute the backbone curves from the frequency-amplitude relationship of decaying vibrations on SSMs reconstructed from measured data.  A connection with the backbone curve obtained from the forced response, however, is not immediate.   

In summary, available approaches to compute forced response via model reduction for nonlinear mechanical systems suffer either from heuristic steps or omissions in the reduction procedure, or from a unclear relationship between backbone-curve definitions different from the one relevant for forced-damped vibrations in a practical setting. In the present work, we seek to eliminate these shortcomings simultaneously. First, we employ a mathematically justified reduction process to time-dependent SSMs in the presence of general damping and forcing. Second, with universal, system-independent formulas for the dynamics on the SSM at hand, we derive explicit, leading-order approximations to the actually observed backbone curve of the time-dependent, dissipative response. We show how all this can be achieved  without the use of extensive numerics (such as numerical continuation or numerical time integration) or extensive numerical experimentation (force appropriation and resonance decay). 

Our results are based on a parameterization of an autonomous SSMs that can be continued under the addition of small external forcing~(section 3). Via a simplification of the resulting reduced dynamics on the non-autonomous SSM, we can directly solve for the amplitudes of the forced response, restricting our focus to oscillations near the origin. Without any further restrictions, we calculate backbone curves, stability of the forced response and the amplitude-frequency relationship explicitly~(section 4). We then demonstrate the performance of our explicit backbone-curve formulas in three numerical examples, on which we also compare our results to those obtained from prior methods for approximating forced responses and backbone curves~(section 5).

	\section{Set-up}

We consider a general, quasi-periodically forced, nonlinear, $N$-degree-of-freedom mechanical system of the form	

\begin{equation}
\begin{split}
\mathbf{M}\ddot{\mathbf{q}}+
(\mathbf{C}+\mathbf{G})\dot{\mathbf{q}}+
(\mathbf{K}+\mathbf{N})\mathbf{q}+
\mathbf{f}_{nlin}(\mathbf{q},\dot{\mathbf{q}})
=\varepsilon \mathbf{f}_{ext}(\Omega_1 t,...,\Omega_k t),
\quad \mathbf{q} \in \mathbb{R}^N,
\quad 0 \leq \varepsilon \ll 1,
\label{eq:sys_phys}
\\
\mathbf{f}_{nlin}(\mathbf{q},\dot{\mathbf{q}})=
\mathcal{O}(|\mathbf{q}|^2,|\mathbf{q}||\dot{\mathbf{q}}|,|\dot{\mathbf{q}}|^2),
\qquad
\mathbf{f}_{ext}(\Omega_1 t,...,\Omega_k t)=
\sum_{\mathbf{k}\in \mathbb{Z}^k}\mathbf{f}_{ext}^{\mathbf{k}}e^{i\langle\mathbf{k},\Omega  \rangle t},
\quad k \geq 1,
\end{split}
\end{equation}

\noindent where~$\mathbf{M}$ is a symmetric, positive definite matrix; the stiffness  matrix~$\mathbf{K}$ and the damping matrix~$\mathbf{C}$ are symmetric, positive semi-definite; the matrix of the follower forces~$\mathbf{N}$ and the gyroscopic matrix~$\mathbf{G}$ are skew-symmetric; and the nonlinear forcing vector $\mathbf{f}_{nlin}(\mathbf{q},\dot{\mathbf{q}})$ is at least quadratic in its arguments. Observe, that $\mathbf{q}\!\equiv\!0$ is an equilibrium of the unforced system ($\varepsilon\!=\!0$). The external forcing $\varepsilon\mathbf{f}_{ext}$ does not depend on the generalized coordinates or velocities and has finitely many rationally incommensurate frequencies~($\Omega_1,...,\Omega_k$). As such, $\mathbf{f}_{ext}$ admits a convergent Fourier representation with frequency base vector $\Omega\!=\!(\Omega_1,...,\Omega_k)$, as indicated. 

We denote the eigenvalues of the linearized system~\eqref{eq:sys_phys} by $\lambda_1,...,\lambda_{2N}$, with multiplicities and conjugates included. We assume an underdamped configuration, i.e. complex eigenvalues with nonzero imaginary part and negative real part. Due to the importance of the eigenvalues with the smallest real part for the existence of the non-autonomous SSM (cf. Haller and Ponsioen~\cite{article:Haller_SSMbasics}), we denote one of these eigenvalues by $\lambda_{min}$ and order all eigenvalues as follows:

\begin{equation}
\lambda_j=\overline{\lambda}_{j+N} \qquad 
\mbox{Im}(\lambda_j) > 0,
\qquad
\mbox{Re}(\lambda_{min}) \leq \mbox{Re}(\lambda_j)<0,\qquad j=1,...,N.
\label{eq:cond_eigval}	
\end{equation}

\noindent  By~\eqref{eq:cond_eigval} the $\mathbf{q}\!\equiv\!0$ equilibrium  of the unforced limit of~\eqref{eq:sys_phys} is asymptotically stable. This context is relevant for vibrations of lightly damped structures.

To obtain the first order equivalent system, we define the matrices 
\begin{equation}
\begin{split}
\mathbf{A}
&
=
\begin{pmatrix}
0& \mathbf{I}\\
-\mathbf{M}^{-1}(\mathbf{K}+\mathbf{N})&-\mathbf{M}^{-1}(\mathbf{C}+\mathbf{G})
\end{pmatrix},
\qquad 
\mathbf{G}_{nlin}(\mathbf{x})
=
\begin{pmatrix}
	0\\
	\mathbf{M}^{-1}\mathbf{f}_{nlin}(\mathbf{x})
\end{pmatrix},
\\
\mathbf{g}_{ext}^{\mathbf{k}}
&=
\begin{pmatrix}
0\\
\mathbf{M}^{-1}\mathbf{f}_{ext}^{\mathbf{k}}
\end{pmatrix}
,
\qquad 
\mathbf{G}_{ext}(\Omega_1 t,...,\Omega_k t)=\sum_{\mathbf{k}\in \mathbb{Z}^k}\mathbf{g}_{ext}^{\mathbf{k}}e^{i\langle\mathbf{k},\Omega  \rangle t}.
\end{split}
\label{eq:ss_mat}
\end{equation}
By letting $\mathbf{x}\!=\!(\mathbf{q},\dot{\mathbf{q}})$ in \eqref{eq:sys_phys} and the definitions~\eqref{eq:ss_mat}, we obtain the first-order equivalent system 
\begin{equation}
\dot{\mathbf{x}}
=	
\mathbf{A}\mathbf{x}
+
\mathbf{G}_{nlin}(\mathbf{x})
+
\varepsilon \mathbf{G}_{ext}(\Omega_1 t,...,\Omega_k t).
\label{eq:system_ss}
\end{equation}
\noindent We define the matrices 
\begin{equation}
\mathbf{\Lambda}=\mbox{diag}(\lambda_1,...,\lambda_{2N}), \qquad \mathbf{V}=\left[\mathbf{v}_1,...,\mathbf{v}_{2N}\right],
\qquad
\mathbf{v}_j=\left(
\begin{array}{c}
\mathbf{e}_j\\
\lambda_j \mathbf{e}_j
\end{array}
\right),
\label{eq:mode_shp}
\end{equation}
\noindent where $\mathbf{v}_j$ is the eigenvector of~\eqref{eq:system_ss}, corresponding to the eigenvalue $\lambda_j$ and to the mode shape $\mathbf{e}_j$ of the linear part of~\eqref{eq:sys_phys}.  We assume that the matrix $\mathbf{A}$ is semisimple and therefore $\mathbf{\Lambda}\!=\!\mathbf{V}^{-1}\mathbf{A}\mathbf{V}$ holds. An equivalent autonomous version of the non-autonomous system~\eqref{eq:system_ss} can be obtained by introducing the phases

\begin{equation}
\phi_j=\Omega_j t, \qquad  j=1,...,k,
\label{eq:angles}
\end{equation}

\noindent which yield

\begin{equation}
\begin{pmatrix}
\dot{\mathbf{x}}\\
\dot{\mathbf{\phi}}
\end{pmatrix}
=
\begin{bmatrix}
\mathbf{A}\mathbf{x}
+
\mathbf{G}_{nlin}(\mathbf{x})
+
\varepsilon \mathbf{G}_{ext}(\phi)
\\
\Omega
\end{bmatrix}.
\label{eq:sys_auto}
\end{equation}

For system~\eqref{eq:system_ss} or its equivalent autonomous form~\eqref{eq:sys_auto}, we now restate main results from  of Haller and Ponsioen~\cite{article:Haller_SSMbasics}. We consider eigenspaces of system~\eqref{eq:system_ss} of the form
	\begin{equation}
	E=\mbox{span}\{\mathbf{v}_{1}, ..., \mathbf{v}_{s}, 
	\mathbf{v}_{N+1}, ... \mathbf{v}_{N+s} \},
	\label{eq:modal_subspace}
	\end{equation}
	with their smoothest nonlinear continuation defined as follows. 
	\begin{definition}
	 A \textit{spectral submanifold} (SSM), $W(E)$, corresponding to the eigenspace $E$ defined in~\eqref{eq:modal_subspace} is an invariant manifold of the system~\eqref{eq:system_ss} with the following properties:
	 \begin{enumerate}[(i)]
	 	\item $W(E)$ has the same dimensions as $E$ (i.e. $\mathrm{dim}(W(E))=2s$) and perturbs smoothly from $E$ at $\textbf{x}\!=\!0$ under the addition of the nonlinear and $\mathcal{O}(\epsilon)$ terms of system~\eqref{eq:system_ss};
	 	\item $W(E)$ is strictly smoother than any other invariant manifold satisfying (i).
	 \end{enumerate}
 	\end{definition}
 
From now on, we assume that the non-resonance conditions
\begin{equation}
\sum_{j=1}^s m_j\mbox{Re}(\lambda_j)  \neq  \mbox{Re}(\lambda_n),
\qquad n = s+1,...,N,
\qquad  2 \leq \sum_{j=1}^s m_j \leq \Sigma(E),
\qquad m_j \in \mathbb{N},
\label{eq:nonres_cond}
\end{equation}
 hold,  with the absolute spectral quotient $\Sigma(E)$ defined as
\begin{equation}
\Sigma(E)=\mbox{Int}
\left( \frac{  \mbox{Re}(\lambda_{min})}{\max\limits_{j=1,...,s}(\mbox{Re}(\lambda_j))}
\right),  
\end{equation}
where the operator $\mbox{Int}(\cdot)$ extracts the integer part of its argument. Then we have the following results on the SSMs of the general mechanical system~\eqref{eq:system_ss};
\begin{theorem}
Assume that the non-resonance conditions~\eqref{eq:nonres_cond} are satisfied for an eigenspace $E$ defined in~\eqref{eq:modal_subspace}. Then the following statements hold: 
 \begin{enumerate}[(i)]
	\item The SSM, $W(E)$, for system~\eqref{eq:system_ss} uniquely exists in the class of $C^{\Sigma(E)+1}$ manifolds.
	\item A parameterization $\mathbf{W}: ~\mathbb{R}^{2s}\rightarrow \mathbb{R}^{2N}$ of the invariant manifold $W(E)$ can be approximated in a neighborhood of the origin as a polynomial in the parameterization variable $\mathbf{z}$, with coefficients depending on the phase variables $\mathbf{\phi}$, i.e.,  
	\begin{equation}
	\mathbf{x}=\mathbf{W}(\mathbf{z},\mathbf{\phi}), \qquad \mathbf{z}\in \mathbb{R}^{2s},
	\end{equation} 
	\item There exist a polynomial function  $\mathbf{R}(\mathbf{z},\mathbf{\phi})$, defined on an open neighborhood of $\mathbf{x}\!=\!0$, such that the invariance condition
	\begin{equation}
	\mathbf{A}\mathbf{W}(\mathbf{z},\mathbf{\phi})+
	\mathbf{G}(\mathbf{W}(\mathbf{z},\mathbf{\phi}))+
	\varepsilon\mathbf{G}_{ext}(\mathbf{\phi})
	=
	D_{\mathbf{z}}\mathbf{W}(\mathbf{z},\mathbf{\phi})\mathbf{R}(\mathbf{z},\mathbf{\phi})
	+D_{\mathbf{\phi}}\mathbf{W}(\mathbf{z},\mathbf{\phi})\Omega.
	\label{eq:inv_cond_SSM}
	\end{equation}
	holds. Therefore, the dynamics on the SSM (i.e., the reduced dynamics) are governed by
	\begin{equation}
	\dot{\mathbf{z}}=\mathbf{R}(\mathbf{z},\mathbf{\phi}).
	\end{equation} 
	\item The parameterization $\mathbf{W}(\mathbf{z},\phi)$, as well the reduced dynamics $\mathbf{R}(\mathbf{z},\phi)$, are robust with respect to changes in the parameters. 
\end{enumerate}
\label{Thm:SSM}
\end{theorem}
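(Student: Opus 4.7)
The plan is to derive Theorem~\ref{Thm:SSM} by specializing the general parameterization-method results of~\cite{HARO_SSM}, as packaged for SSMs in~\cite{article:Haller_SSMbasics}, to the present mechanical setting. My first step will be to recast the non-autonomous system~\eqref{eq:system_ss} as the autonomous flow~\eqref{eq:sys_auto} on the extended phase space $\mathbb{R}^{2N}\times\mathbb{T}^k$. For $\varepsilon=0$, the set $\{\mathbf{x}\!=\!0\}\times\mathbb{T}^k$ is a normally hyperbolic invariant torus whose normal spectrum coincides with that of $\mathbf{A}$ and is uniformly contracting by~\eqref{eq:cond_eigval}. Standard persistence then yields, for all sufficiently small $\varepsilon$, a unique nearby $C^{\infty}$ invariant torus (the forced NNM), and this torus depends smoothly on all system parameters.

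Next, I will invoke the parameterization theorem: over this perturbed torus there is a spectral subbundle associated with the master eigenvalues $\lambda_1,\ldots,\lambda_s$ and their conjugates. The non-resonance conditions~\eqref{eq:nonres_cond} then imply the existence of a unique invariant manifold tangent to this subbundle whose smoothness class is strictly greater than that of any competitor, which gives part (i). Existence will follow from the iterative construction described below, while uniqueness follows from the spectral-gap inequality built into the definition of $\Sigma(E)$, which forbids any competing invariant manifold of the required regularity.

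For (ii) and (iii), I will seek $\mathbf{W}(\mathbf{z},\phi)$ and $\mathbf{R}(\mathbf{z},\phi)$ as formal Taylor expansions in $\mathbf{z}$ with coefficients $\mathbf{w}_\alpha(\phi), \mathbf{r}_\alpha(\phi)$ that are smooth functions on $\mathbb{T}^k$. Substituting into the invariance equation~\eqref{eq:inv_cond_SSM} and collecting terms of multi-index $\alpha$ gives, at each order, a linear functional (cohomological) equation of the form
\begin{equation*}
\bigl(\langle\alpha,\mathbf{\Lambda}_E\rangle\mathbf{I}-\mathbf{A}\bigr)\mathbf{w}_\alpha(\phi)+\langle\Omega,D_\phi\mathbf{w}_\alpha(\phi)\rangle \;=\; \mathbf{h}_\alpha(\phi),
\end{equation*}
where $\mathbf{\Lambda}_E$ denotes the diagonal of master eigenvalues and $\mathbf{h}_\alpha$ collects known lower-order nonlinear contributions together with the Fourier coefficients $\mathbf{g}_{ext}^{\mathbf{k}}$ of the external forcing. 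Expanding both sides in $\phi$-Fourier modes, the left-hand operator acts mode-wise as $\bigl(\langle\alpha,\mathbf{\Lambda}_E\rangle + i\langle\mathbf{k},\Omega\rangle\bigr)\mathbf{I}-\mathbf{A}$, and~\eqref{eq:nonres_cond} together with the hyperbolicity~\eqref{eq:cond_eigval} makes it invertible for all $\alpha$ with $2\leq|\alpha|\leq\Sigma(E)$ and all $\mathbf{k}\in\mathbb{Z}^k$. Monomial terms that would map resonantly into $E$ are absorbed into $\mathbf{R}$ rather than $\mathbf{W}$, as in normal form theory, producing the claimed polynomial parameterization and reduced vector field.

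Finally, part (iv) will follow because the entire construction is the fixed point of a contraction whose ingredients (the matrices $\mathbf{A},\mathbf{V}$, the nonlinearity $\mathbf{G}_{nlin}$, the Fourier data $\mathbf{g}_{ext}^{\mathbf{k}}$, and $\varepsilon$) depend smoothly on the system data, so the implicit function theorem in an appropriate Banach space of $\phi$-dependent Taylor coefficients transfers this smoothness to $\mathbf{W}$ and $\mathbf{R}$. The hard part of the plan is the invertibility of the above cohomological operator in the presence of quasi-periodic forcing: although each Fourier mode can be inverted algebraically, the uniform bounds required to sum the series demand a Diophantine condition on $\Omega$ in addition to~\eqref{eq:nonres_cond}, and the control of the resulting small divisors is precisely the technical core supplied by~\cite{HARO_SSM} and adapted to SSMs in~\cite{article:Haller_SSMbasics}.
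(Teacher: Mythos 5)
Your proposal is correct and follows essentially the same route as the paper: the paper's proof of Theorem~\ref{Thm:SSM} is a one-line citation to Theorem~4 of Haller and Ponsioen~\cite{article:Haller_SSMbasics}, itself deduced from Theorem~4.1 of Haro and de la Llave~\cite{HARO_SSM}, and your order-by-order cohomological-equation sketch is a faithful expansion of exactly that machinery. You rightly identify that the uniform small-divisor bounds for quasi-periodic $\Omega$ are the technical core, and you defer them to the same references the paper does, so nothing is missing relative to the paper's own argument.
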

	
  \begin{proof} This is a restatement of the main theorem by Haller and Ponsioen~\cite{article:Haller_SSMbasics} (Theorem 4), deduced from more abstract results on invariant manifolds of Haro and de la Lave~\cite{HARO_SSM} (Theorem 4.1) in our current setting.
 \end{proof}

 If the non-resonance conditions are satisfied for the general mechanical system~\eqref{eq:sys_phys}, Theorem~\ref{Thm:SSM} establishes the existence, smoothness and uniqueness of the SSM tangent to a modal subspace of interest. Due to the smooth persistence of the SSM with respect to the small parameter $\varepsilon$, the parameterization of the SSM, as well the reduced dynamics, can be expanded in $\varepsilon$. Since the forcing in eq.~\eqref{eq:system_ss} is of the first order in $\varepsilon$, the leading-order approximations to the spectral submanifold ($\mathbf{W}_0$) and to the dynamics~($\mathbf{R}_0$) do not depend on the phase variables $\mathbf{\phi}$. Specifically, we have
\begin{equation}
\begin{split}
\mathbf{W}(\mathbf{z},\mathbf{\phi})&=
\mathbf{W}_0(\mathbf{z})+\sum_{l=1}^{\infty} \varepsilon^l \mathbf{W}_l(\mathbf{z},\mathbf{\phi})
,
\\
\mathbf{R}(\mathbf{z},\mathbf{\phi})&=
\mathbf{R}_0(\mathbf{z})+\sum_{l=1}^{\infty}\varepsilon^l \mathbf{R}_l(\mathbf{z},\mathbf{\phi})
,
\end{split}
\label{eq:expans_SSM+dyn}
\end{equation}
where the subscripts of $\mathbf{W}$ and $\mathbf{R}$ indicate the order in the $\varepsilon$ expansion. As a consequence, we have the following corollary.
\begin{corollary}
 If in addition to the inner non-resonance conditions~\eqref{eq:nonres_cond}, the outer  non-resonance conditions
\begin{equation}
\sum_{j=1}^s m_j \lambda_j \neq \lambda_n, 
\qquad n =1,...,s,
\qquad  2 \leq \sum_{j=1}^s m_j \leq \Sigma(E),
\qquad m_j \in \mathbb{N},
\label{eq:simplify_cond}
\end{equation}
\noindent hold for the eigenspace $E$ defined in~\eqref{eq:modal_subspace}, then $\mathbf{R}_0(\mathbf{z},\phi)$ can be chosen linear in $\mathbf{z}$. 
\end{corollary}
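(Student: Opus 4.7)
The plan is to expand the $\phi$-independent part of the $\varepsilon^0$ invariance equation, namely
\begin{equation*}
\mathbf{A}\mathbf{W}_0(\mathbf{z}) + \mathbf{G}_{nlin}(\mathbf{W}_0(\mathbf{z})) = D_\mathbf{z}\mathbf{W}_0(\mathbf{z})\, \mathbf{R}_0(\mathbf{z}),
\end{equation*}
obtained from \eqref{eq:inv_cond_SSM} by substituting \eqref{eq:expans_SSM+dyn}, order by order in the polynomial degree of $\mathbf{z}$. Letting $\mathbf{V}_E$ and $\mathbf{\Lambda}_E$ denote the restrictions of $\mathbf{V}$ and $\mathbf{\Lambda}$ to the eigenspace $E$, I would write
\begin{equation*}
\mathbf{W}_0(\mathbf{z}) = \mathbf{V}_E\mathbf{z} + \sum_{|\mathbf{m}|\geq 2}\mathbf{W}_{0,\mathbf{m}}\mathbf{z}^{\mathbf{m}}, \qquad \mathbf{R}_0(\mathbf{z}) = \mathbf{\Lambda}_E\mathbf{z} + \sum_{|\mathbf{m}|\geq 2}\mathbf{R}_{0,\mathbf{m}}\mathbf{z}^{\mathbf{m}},
\end{equation*}
and abbreviate $\lambda(\mathbf{m}) := \sum_j m_j \lambda_j^{E}$, the weighted sum of eigenvalues of $\mathbf{A}|_E$ attached to the monomial $\mathbf{z}^{\mathbf{m}}$ in the diagonalizing frame.

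Matching the coefficient of $\mathbf{z}^{\mathbf{m}}$ for each $2 \leq |\mathbf{m}| \leq \Sigma(E)$ would yield a cohomological equation of the form
\begin{equation*}
\bigl(\mathbf{A} - \lambda(\mathbf{m})\mathbf{I}\bigr)\mathbf{W}_{0,\mathbf{m}} = \mathbf{V}_E\mathbf{R}_{0,\mathbf{m}} + \mathbf{h}_{\mathbf{m}},
\end{equation*}
where $\mathbf{h}_{\mathbf{m}}$ is known from lower-order coefficients of $\mathbf{W}_0$, $\mathbf{R}_0$ and $\mathbf{G}_{nlin}$. Left-multiplying by $\mathbf{V}^{-1}$ and using $\mathbf{V}^{-1}\mathbf{A}\mathbf{V} = \mathbf{\Lambda}$ decouples this system into $2N$ scalar equations, one per eigendirection $\mathbf{v}_n$, with coefficient $\lambda_n - \lambda(\mathbf{m})$.

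Next I would split into cases. For $n \notin \{1,\dots,s,N+1,\dots,N+s\}$, the term $\mathbf{V}^{-1}\mathbf{V}_E\mathbf{R}_{0,\mathbf{m}}$ has zero component along $\mathbf{v}_n$, and the inner non-resonance conditions \eqref{eq:nonres_cond} (being inequalities of real parts, they imply $\lambda_n \neq \lambda(\mathbf{m})$) determine the $\mathbf{v}_n$-component of $\mathbf{W}_{0,\mathbf{m}}$ uniquely. For $n \in \{1,\dots,s,N+1,\dots,N+s\}$, the outer non-resonance conditions \eqref{eq:simplify_cond}, together with their complex conjugates, provide $\lambda_n - \lambda(\mathbf{m}) \neq 0$; this spare nonzero factor allows me to choose $[\mathbf{R}_{0,\mathbf{m}}]_n = 0$ and still solve uniquely for the $\mathbf{v}_n$-component of $\mathbf{W}_{0,\mathbf{m}}$. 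Iterating this construction through $|\mathbf{m}| = 2, 3, \dots, \Sigma(E)$ produces an admissible parameterization of $W(E)$ for which $\mathbf{R}_0(\mathbf{z}) = \mathbf{\Lambda}_E\mathbf{z}$ is linear.

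The main obstacle is reconciling this normal-form style parameterization with the graph-style parameterization typically used to establish existence in Theorem~\ref{Thm:SSM}. By the uniqueness clause of that theorem, however, $W(E)$ itself is a parameterization-independent geometric object, so the graph-style and the present normal-form coordinates are connected by a polynomial, near-identity change of coordinates on $E$ whose termwise solvability up to degree $\Sigma(E)$ is guaranteed exactly by \eqref{eq:simplify_cond}. Preservation of the complex-conjugate structure inherited from the real physical system is automatic, because both non-resonance conditions are invariant under simultaneous conjugation of $n$ and $\mathbf{m}$, so the freedoms exercised at conjugate indices can be chosen consistently.
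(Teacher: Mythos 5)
Your argument is correct, but it takes a genuinely different route from the paper: the paper proves this corollary in one line by citing Theorem~1.1 of Cabr\'e et al.~\cite{Cabre_para} (and its restatement in Szalai et al.~\cite{article:SSM_SysID}), noting only that the citation applies because $\mathbf{W}_0$ and $\mathbf{R}_0$ are autonomous. You instead re-derive the relevant part of that theorem from scratch: expanding the $\varepsilon^0$ invariance equation order by order, obtaining the cohomological equations $(\mathbf{A}-\lambda(\mathbf{m})\mathbf{I})\mathbf{W}_{0,\mathbf{m}}=\mathbf{V}_E\mathbf{R}_{0,\mathbf{m}}+\mathbf{h}_{\mathbf{m}}$, and diagonalizing to see exactly where each hypothesis enters --- \eqref{eq:nonres_cond} guarantees solvability in the normal directions (correctly noting that an inequality of real parts implies inequality of the complex eigenvalues), while \eqref{eq:simplify_cond} supplies the nonzero factor $\lambda_n-\lambda(\mathbf{m})$ in the tangential directions that lets you set $[\mathbf{R}_{0,\mathbf{m}}]_n=0$. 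What your approach buys is transparency: it exhibits the mechanism by which the outer non-resonance conditions trade a nonlinear term in $\mathbf{R}_0$ for a solvable correction to $\mathbf{W}_0$, which is precisely the trade-off the paper later reverses when it deliberately retains the near-resonant terms in \eqref{eq:R0}. What the citation buys is the rigorous passage from the formal order-by-order solution to an actual invariant manifold; you handle this by appealing to the uniqueness clause of Theorem~\ref{Thm:SSM} and a polynomial reparameterization of $E$, which is an acceptable substitute.

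Two small imprecisions, both inherited from or comparable to the paper's own statement. First, your $\lambda(\mathbf{m})$ correctly ranges over all $2s$ eigenvalues of $\mathbf{A}|_E$, but \eqref{eq:simplify_cond} as written sums only over $j=1,\dots,s$; taking it ``together with its complex conjugate'' covers monomials purely in $\mathbf{z}$ or purely in $\bar{\mathbf{z}}$ but not mixed monomials such as $2\lambda_1+\bar{\lambda}_2=\lambda_1$. The condition must be read as ranging over all multi-indices on $E$ for your Case~2 to close --- this is the intended reading (it is what Cabr\'e et al. require), but it is worth stating. Second, you stop the iteration at $|\mathbf{m}|=\Sigma(E)$; to conclude that $\mathbf{R}_0$ is linear (rather than linear up to that order) you should add the standard remark that for $|\mathbf{m}|>\Sigma(E)$ the resonance $\lambda(\mathbf{m})=\lambda_n$ is automatically excluded by the definition of the spectral quotient, since $\sum_j m_j\mathrm{Re}(\lambda_j)<\mathrm{Re}(\lambda_{min})\le\mathrm{Re}(\lambda_n)$ there. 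Neither point undermines the argument.
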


\begin{proof}
This corollary follows directly from the work of Cabre et al.~\cite{Cabre_para} for discrete mappings (Theorem 1.1) and is also stated by Szalai~et~al.~\cite{article:SSM_SysID}. These results are applicable here since $\mathbf{W}_0$ and $\mathbf{R}_0$ are autonomous. 
\end{proof}

To conveniently express the polynomial expansions of $\mathbf{W}(\mathbf{z},\mathbf{\phi})$ and $\mathbf{R}(\mathbf{z},\mathbf{\phi})$,  we use the multi-index notation 
\begin{equation}
\begin{split}
\mathbf{W}_l(\mathbf{z},\mathbf{\phi})
&
=\sum_{\mathbf{m}\in \mathbb{N}^{2s}_0} \mathbf{w}_l^{\mathbf{m}} (\mathbf{ \phi}) \mathbf{z}^{\mathbf{m}},
\qquad \mathbf{w}_l^{\mathbf{m}} \in \mathbb{C}^{2N},
\\
\mathbf{R}_l(\mathbf{z},\mathbf{\phi})
&=\sum_{\mathbf{m}\in \mathbb{N}^{2s}_0} \mathbf{r}_l^{\mathbf{m}} (\mathbf{ \phi}) \mathbf{z}^{\mathbf{m}},
\qquad \mathbf{r}_l^{\mathbf{m}} \in \mathbb{C}^{2s},
\end{split}
\label{eq:multiindex_exp}
\end{equation}
 where the superscript $\mathbf{m}$ indicates the associated monomial of the coefficient vectors $\mathbf{w}_l^{\mathbf{m}}$ and $\mathbf{r}_l^{\mathbf{m}}$.

 \section{Spectral submanifolds for the forced system}

Given a parameterization of the SSM, $W(E)$, and its reduced dynamics for the autonomous limit of~\eqref{eq:sys_phys} ($\varepsilon\!=\!0$), we now consider the continuation of these under the addition of small forcing terms. We truncate the parameterization $\mathbf{W}(\mathbf{z},\mathbf{ \phi})$ and the associated reduced dynamics $\mathbf{R}(\mathbf{z},\mathbf{ \phi})$ at $\mathcal{O}(\varepsilon|\mathbf{z}|,\varepsilon^2)$. With the notation~\eqref{eq:multiindex_exp}, the series expansion~\eqref{eq:expans_SSM+dyn} of $\mathbf{W}(\mathbf{z},\mathbf{\phi})$ and~$\mathbf{R}(\mathbf{z},\mathbf{\phi})$ can be rewritten as 
	\begin{equation}
	\begin{split}
	\mathbf{W}(\mathbf{z},\mathbf{\phi})&=
	\mathbf{W}_0(\mathbf{z})+\varepsilon\mathbf{w}^{\mathbf{0}}_1(\mathbf{\phi}) 
	+\mathcal{O}(\varepsilon|\mathbf{z}|,\varepsilon^2),
	\\
	\mathbf{R}(\mathbf{z},\mathbf{\phi})&=
	\mathbf{R}_0(\mathbf{z})+\varepsilon\mathbf{r}^{\mathbf{0}}_1(\mathbf{\phi}) 
	+\mathcal{O}(\varepsilon|\mathbf{z}|,\varepsilon^2). 
	\end{split}
	\label{eq:expans_SSM+dyn_detail_epsz}
	\end{equation} 
The equations~\eqref{eq:expans_SSM+dyn_detail_epsz} reveal that only the unknown coefficient vectors~$\mathbf{w}^{\mathbf{0}}_1(\mathbf{\phi})$ and~$\mathbf{r}^{\mathbf{0}}_1(\mathbf{\phi})$  need to be computed to achieve the desired $\mathcal{O}(\varepsilon|\mathbf{z}|,\varepsilon^2)$ accuracy.

First, we discuss a general leading-order parameterization $\mathbf{W}_0$ and its dynamics $\mathbf{R}_0$, then we modify this parameterization to accomodate the near-resonant nature of conjugate eigenvalue pairs that arises under weak damping (cf.~Szalai~et~al.~\cite{article:SSM_SysID}). 

\subsection{General parameterization}

 For a general parameterization truncated at $\mathcal{O}(\varepsilon|\mathbf{z}|,\varepsilon^2)$, we state the result in the following lemma;
 
 \begin{lemma}
 If the non-resonance conditions~\eqref{eq:nonres_cond} are satisfied for the spectral subspace~\eqref{eq:modal_subspace} of system~\eqref{eq:system_ss}, then the coefficient vectors~$\mathbf{w}^{\mathbf{0}}_1(\mathbf{\phi})$ and~$\mathbf{r}^{\mathbf{0}}_1(\mathbf{\phi})$ of  parameterization $\mathbf{W}(\mathbf{z},\mathbf{\phi})$  and the dynamics $\mathbf{R}(\mathbf{z},\mathbf{\phi})$ are given by 
 \label{lem:gen_para}
\begin{subequations}
\begin{align}
\mathbf{w}^0_1&=
\sum_{\mathbf{k}\in\mathbb{Z}^k} 
\mathbf{V}
(i \langle \mathbf{k},\Omega\rangle \mathbf{I} -\mathbf{\Lambda})^{-1}
\mathbf{V}^{-1}
\mathbf{g}_{ext}^{\mathbf{k}}
e^{i\langle\mathbf{k},\phi\rangle}, 
\label{eq:w1_harm}
\\
\mathbf{r}^0_1&=\mathbf{0}.
\label{eq:r1_harm}
\end{align}
\end{subequations}
 \end{lemma}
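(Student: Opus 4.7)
The plan is to substitute the truncated expansions \eqref{eq:expans_SSM+dyn_detail_epsz} into the invariance condition \eqref{eq:inv_cond_SSM} from Theorem~\ref{Thm:SSM} and balance the $\mathcal{O}(\varepsilon)$ terms obtained at $\mathbf{z}=\mathbf{0}$. Three preliminary observations will make the bookkeeping routine. First, since the origin is the fixed point of the autonomous limit, $\mathbf{W}_0(\mathbf{0})=\mathbf{0}$ and $\mathbf{R}_0(\mathbf{0})=\mathbf{0}$. Second, because $\mathbf{f}_{nlin}$ (hence $\mathbf{G}_{nlin}$) is at least quadratic near the origin, both $\mathbf{G}_{nlin}$ and its Jacobian vanish there, so $\mathbf{G}_{nlin}(\mathbf{W}(\mathbf{z},\mathbf{\phi}))$ contributes nothing at order $\varepsilon$ when evaluated at $\mathbf{z}=\mathbf{0}$. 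Third, tangency of $W(E)$ to the eigenspace $E$ at the origin identifies $D_{\mathbf{z}}\mathbf{W}_0(\mathbf{0})$ with the $2N\times 2s$ matrix whose columns are the eigenvectors $\mathbf{v}_1,\ldots,\mathbf{v}_s,\mathbf{v}_{N+1},\ldots,\mathbf{v}_{N+s}$.

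Combining these, the $\mathcal{O}(\varepsilon)$ projection of \eqref{eq:inv_cond_SSM} at $\mathbf{z}=\mathbf{0}$ collapses to the linear functional equation
\begin{equation*}
\mathbf{A}\mathbf{w}_1^{\mathbf{0}}(\mathbf{\phi})+\mathbf{G}_{ext}(\mathbf{\phi})=D_{\mathbf{z}}\mathbf{W}_0(\mathbf{0})\,\mathbf{r}_1^{\mathbf{0}}(\mathbf{\phi})+D_{\mathbf{\phi}}\mathbf{w}_1^{\mathbf{0}}(\mathbf{\phi})\,\Omega.
\end{equation*}
Next I would Fourier expand $\mathbf{w}_1^{\mathbf{0}}(\mathbf{\phi})=\sum_{\mathbf{k}}\mathbf{w}_{1,\mathbf{k}}^{\mathbf{0}}e^{i\langle\mathbf{k},\mathbf{\phi}\rangle}$ and $\mathbf{r}_1^{\mathbf{0}}(\mathbf{\phi})=\sum_{\mathbf{k}}\mathbf{r}_{1,\mathbf{k}}^{\mathbf{0}}e^{i\langle\mathbf{k},\mathbf{\phi}\rangle}$, insert the Fourier series of $\mathbf{G}_{ext}$ from \eqref{eq:ss_mat}, and equate coefficients of $e^{i\langle\mathbf{k},\mathbf{\phi}\rangle}$ to obtain, for each $\mathbf{k}\in\mathbb{Z}^{k}$,
\begin{equation*}
\bigl(i\langle\mathbf{k},\Omega\rangle\mathbf{I}-\mathbf{A}\bigr)\mathbf{w}_{1,\mathbf{k}}^{\mathbf{0}}=\mathbf{g}_{ext}^{\mathbf{k}}-D_{\mathbf{z}}\mathbf{W}_0(\mathbf{0})\,\mathbf{r}_{1,\mathbf{k}}^{\mathbf{0}}.
\end{equation*}
Selecting the gauge $\mathbf{r}_{1,\mathbf{k}}^{\mathbf{0}}=\mathbf{0}$ for every $\mathbf{k}$ delivers \eqref{eq:r1_harm} by construction, and diagonalizing via $\mathbf{A}=\mathbf{V}\mathbf{\Lambda}\mathbf{V}^{-1}$ gives $\mathbf{w}_{1,\mathbf{k}}^{\mathbf{0}}=\mathbf{V}(i\langle\mathbf{k},\Omega\rangle\mathbf{I}-\mathbf{\Lambda})^{-1}\mathbf{V}^{-1}\mathbf{g}_{ext}^{\mathbf{k}}$, which sums to \eqref{eq:w1_harm}.

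The only substantive step that needs checking is invertibility of $i\langle\mathbf{k},\Omega\rangle\mathbf{I}-\mathbf{\Lambda}$ for every $\mathbf{k}\in\mathbb{Z}^{k}$. This is automatic from the underdamped assumption \eqref{eq:cond_eigval}, since each $\lambda_j$ has strictly negative real part while $i\langle\mathbf{k},\Omega\rangle$ is purely imaginary, so $\lambda_j\neq i\langle\mathbf{k},\Omega\rangle$ in every case; the higher-order conditions \eqref{eq:nonres_cond} enter this lemma only indirectly, through Theorem~\ref{Thm:SSM}, ensuring that the ambient SSM to be parameterized exists. A minor subtlety worth a sentence in the written proof is that $\mathbf{r}_1^{\mathbf{0}}\equiv\mathbf{0}$ is a parameterization choice rather than a forced conclusion: at this order the inhomogeneity $\mathbf{g}_{ext}^{\mathbf{k}}$ can be absorbed entirely into $\mathbf{w}_{1,\mathbf{k}}^{\mathbf{0}}$ precisely because the purely imaginary forcing frequencies cannot resonate with the master eigenvalues of nonzero real part, leaving no residual that must be shifted into $\mathbf{r}_1^{\mathbf{0}}$.
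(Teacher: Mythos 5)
Your proposal is correct and follows essentially the same route as the paper's Appendix A1: substitute the $\varepsilon$-expansion into the invariance condition, observe that the nonlinearity and the autonomous terms drop out at order $\mathcal{O}(\varepsilon,|\mathbf{z}|^0)$, choose $\mathbf{r}_1^{\mathbf{0}}=\mathbf{0}$, and solve the resulting linear equation for $\mathbf{w}_1^{\mathbf{0}}$ harmonic by harmonic. The only cosmetic difference is that the paper writes the periodic solution via Duhamel's formula and then evaluates the integral on the Fourier series, whereas you match Fourier coefficients directly; your explicit remarks on the invertibility of $i\langle\mathbf{k},\Omega\rangle\mathbf{I}-\mathbf{\Lambda}$ and on $\mathbf{r}_1^{\mathbf{0}}=\mathbf{0}$ being a gauge choice are both consistent with the paper.
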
 

\begin{proof}
	The non-resonance conditions~\eqref{eq:nonres_cond} ensure the existence of the SSM, therefore the parameterization and the reduced dynamics can be expressed in the form~\eqref{eq:expans_SSM+dyn}. Substituting this series expansion into the invariance condition~\eqref{eq:inv_cond_SSM} and comparing terms of equal order in $\varepsilon$ and $\mathbf{z}$, we obtain the expressions~\eqref{eq:w1_harm} and~\eqref{eq:r1_harm}. We detail this coefficient comparison in Appendix~\ref{app:der_res1}.
\end{proof}

The specific form of $\mathbf{W}_0$ and $\mathbf{R}_0$ depends on the choice of the modal subspace~\eqref{eq:modal_subspace}. Cabre~et~al.~\cite{Cabre_para} point out that the parameterization of SSM is not unique, even though the SSM is. Because of the conditions \eqref{eq:cond_eigval}, the inverse in formula \eqref{eq:w1_harm} is nonsingular. Still, if the norm of the damping matrix $\mathbf{C}$ is small and a harmonic $\langle \mathbf{k},\Omega\rangle $ is near-resonant, i.e., 

\begin{equation}
\langle \mathbf{k},\Omega \rangle \approx \mbox{Im}(\lambda_l),
\label{eq:neqr_res_forcing}
\end{equation}

\noindent then small denominators arise in eq.~\eqref{eq:w1_harm}. These denominators would restrict the domain of validity of our calculations. To avoid this issue, we will eliminate small denominators by keeping terms in  $\mathbf{R}(\mathbf{z},\mathbf{\phi})$, that could otherwise be eliminated from the reduced dynamics.

\subsection{Forced response of the nonlinear mechanical system}

Having identified terms that potentially contain small denominators, we continue by keeping additional terms in the reduced dynamics to ascertain that no small denominators arise in the parameterization. In order to construct frequency-amplitude response curves, we now assume canonical single-harmonic forcing ($k\!=\!1$)  in the form of

\begin{equation}
\mathbf{f}_{ext}=\mathbf{f}\cos(\Omega t)=\mathbf{f}\frac{e^{i\Omega t}+e^{-i\Omega t}}{2}.
\label{eq:forcing}
\end{equation}  

\noindent Therefore, only the forcing terms $\mathbf{g}_{ext}^{\pm1}$ (cf. eq.~\eqref{eq:ss_mat}) are nonzero. The period of the forcing~\eqref{eq:forcing} is $T\!=\!2\pi/\Omega$.  We restrict our calculations to the case when $W(E)$ is two-dimensional ($s\!=\!1$), which are tangent to an eigenspace

\begin{equation}
E_l=\mbox{span}\{\mathbf{v}_{l}, \mathbf{v}_{l+N}\}.
\label{eq:2D_eigspace}
\end{equation}

We denote the parameterization variable for the corresponding SSM $W(E_l)$ by $\mathbf{z}\!=\!\left[z_l, \overline{z}_l\right]^T$. Since the eigenvalues $\lambda_l$ and $\lambda_{l\!+\!N}$  are complex conjugates~(cf.~condition~\eqref{eq:cond_eigval}), the internal resonance conditions~\eqref{eq:simplify_cond} are technically satisfied and the dynamics $\mathbf{R}_0(\mathbf{z})$ could be chosen linear. As noted by Szalai~et~al.~\cite{article:SSM_SysID}, however, the near-resonance relationships

\begin{equation}
(m+1)\lambda_l+m\lambda_{l+N}\approx\lambda_l,
\qquad
m\lambda_l+(m+1)\lambda_{l+N}\approx\lambda_{l+N},
\qquad m \leq M
\end{equation} 

\noindent between complex conjugate eigenvalues always hold for small damping (i.e. $2M|\mbox{Re}(\lambda_l)\!| \ll\! 1 $). The weaker the damping, the higher the value of the positive integer $M$ needs to be set. Removing the corresponding terms from the dynamics would lead to small denominators in the parameterization $\mathbf{W}_0$ of the SSM. To this end, we keep such near-resonant terms in $\mathbf{R}_0(\mathbf{z})$ by letting

\begin{equation}
\mathbf{R}_0(\mathbf{z})= 
\begin{bmatrix}
\lambda_l z_l\\
\bar{\lambda}_{l} \bar{z}_l
\end{bmatrix}
+\sum_{m=1}^M
\begin{bmatrix}
\beta_m z^{m+1}_l\bar{z}^{m}_l\\
\overline{\beta}_m z^{m}_l\bar{z}^{m+1}_l
\end{bmatrix}+\mathcal{O}(\mathbf{z}^{2M+3}).
\label{eq:R0}
\end{equation}

The order of the autonomous SSM and its associated dynamics in the parameterization variable $\mathbf{z}$ is $2M\!+\!1$. For instance, for the choice of  $M\!=\!1$, a parameterization the autonomous SSM  $\mathbf{W}_0(\mathbf{z})$  and its associated dynamics $\mathbf{R}_0(\mathbf{z})$ are of order three in $\mathbf{z}$. For this case, a formula for the constant $\beta_1$ in~\eqref{eq:R0} for the general mechanical system~\eqref{eq:system_ss} with diagonalized linear part is given by Szalai~et~al.~\cite{article:SSM_SysID}, which we recall in Appendix~\ref{app:Coeff_Szalai} for completeness. 

For increasing accuracy or large amplitude oscillations it is desirable to compute~\eqref{eq:R0} for a higher choice of $M$ ($M\!>\!1$). To compute the arising constants $\beta_m$ of the reduced dynamics~\eqref{eq:R0} the invariance condition (cf. Appendix~\ref{app:der_res1} eq.~\eqref{eq:inv_first_order}) has to be solved for a polynomial  $\mathbf{W}_0(\mathbf{z})$ and $\mathbf{R}_0(\mathbf{z})$ manually or the automated computation package of Ponsioen et al.~\cite{Sten_auto_SSM} can be utilized. For the calculation of the $\mathcal{O}(5)$ SSM  ($M\!=\!2$), we provide a \textsc{Matlab} script as electronic supplementary material.  

As for the computation of $\mathbf{W}_0$ in the $M\!=\!1$ case, Szalai~et~al.~\cite{article:SSM_SysID}, showed that a  two-dimensional SSM,  $W(E_l)$, of the unforced limit of system~\eqref{eq:sys_phys} can be constructed, if the further non-resonance conditions

\begin{equation}
m_1\lambda_l+m_2\bar{\lambda}_l\not\approx \lambda_j, \qquad j\neq l,l+N,\qquad 1 \leq m_1+m_2 \leq \Sigma(E_l),
\label{eq:ext_nearres_cond}
\end{equation}

\noindent are satisfied.

To study the continuation of the autonomous SSM from Szalai~et~al.~\cite{article:SSM_SysID} under the addition of the small forcing terms defined in~\eqref{eq:forcing}, we rescale the parameterization variable 

\begin{equation}
\mathbf{z}\mapsto \varepsilon^{\frac{1}{2M+2}}\mathbf{z}=\mu \mathbf{z},
\label{eq:rescaling}
\end{equation}

\noindent  and truncate all formulas for the SSM and its reduced dynamics at order $\mu^{2M+3}$ in the following. Higher-order approximations could be obtained in a similar fashion.

To  explicitly construct an approximation to the SSM, we define the matrices $\mathbf{S}^{+}$ and $\mathbf{S}^{-}$ elementwise as
 	\begin{equation}
  \mathbf{S}^{+}_{jm}=\delta_{jm}-\delta_{lj}\delta_{lm}, 
 	\qquad
 	\mathbf{S}^{-}=\delta_{jm}-\delta_{(l+N)j}\delta_{(l+N)m},
 	\qquad j,m=1,...,2N.
 	\label{eq:aux_matrix}
 \end{equation}	
 Both matrices ($\mathbf{S}^{+}$ and $\mathbf{S}^{-}$) equal the identity, except that the element $S^+_{ll}$  is zero and the $(l\!+\!N)$-th entry on the main diagonal of $\mathbf{S}^{-}$ is zero. Furthermore, we denote the $j$-th row of the inverse of the eigenvector matrix $\mathbf{V}$ by $\mathbf{t}_j$, i.e.,
 \begin{equation}
 [\mathbf{t}_1,\mathbf{t}_2,...,\mathbf{t}_{2N}]=\mathbf{V}^{-1},\qquad \mathbf{t}_j\in\mathbb{C}^{1\times 2N},\qquad j=1,...,2N.
 \label{eq:inverse_mat}
 \end{equation}
 	We then have the following result for the autonomous SSM and its associated reduced dynamics.

 \begin{theorem}
 If the non-resonance conditions \eqref{eq:nonres_cond} and \eqref{eq:ext_nearres_cond} hold for the subspace $E_L$	(cf. eq.~\eqref{eq:2D_eigspace}) for the general mechanical system~\eqref{eq:sys_phys} under the canonical single-harmonic forcing~\eqref{eq:forcing}, then the $\mathcal{O}(\mu^{2M+3})$ approximation of the parameterization and its reduced dynamics can be written in the form
\begin{subequations}
	\begin{align}
	\mathbf{W}(\mathbf{z},\phi)&=\mathbf{W}_0(\mathbf{z})+\mu^{2M+2}\mathbf{w}^{\mathbf{0}}_1(\phi)+\mathcal{O}(\mu^{2M+3}),
	\label{eq:SSM_nearres}
	\\
	\mathbf{R}(\mathbf{z},\Omega t)&=
	\mathbf{R}_0(\mathbf{z})+\mu^{2M+2}\mathbf{r}^{\mathbf{0}}_1(\Omega t)+\mathcal{O}(\mu^{2M+3}),
	\label{eq:dynamics_R_nearres}
	\end{align}
\end{subequations}
where the coefficient vectors $\mathbf{w}^{\mathbf{0}}_1$ and $\mathbf{r}^{\mathbf{0}}_1$ are given by 
\begin{subequations}
	\begin{align}
		\mathbf{w}^{\mathbf{0}}_1&=  
\mathbf{V S}^{+}
(i \Omega\mathbf{I}-\mathbf{\Lambda})^{-1}\mathbf{V}^{-1}
\mathbf{g}_{ext}^{(1)}
e^{i\Omega t}
+  
\mathbf{V S}^{-}
(-i \Omega\mathbf{I}-\mathbf{\Lambda})^{-1}\mathbf{V}^{-1}
\mathbf{g}_{ext}^{(-1)}
e^{- i\Omega t},
\label{eq:SSM_nearres_sol}
\\
\mathbf{r}^{\mathbf{0}}_1&=
r_c 
\begin{bmatrix}
e^{i\Omega t}\\
-e^{-i\Omega t}
\end{bmatrix},
\qquad 
r_{c}= \mathbf{t}_l
\mathbf{g}_{ext}^{(1)}.
\label{eq:Red_dyn_forcing}
\end{align}
\end{subequations}
\label{Thm:auto_SSM}
\end{theorem}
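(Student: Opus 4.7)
My plan is to substitute the truncated ansatz \eqref{eq:SSM_nearres}--\eqref{eq:dynamics_R_nearres} into the invariance condition \eqref{eq:inv_cond_SSM} and balance both sides order by order, first in $\mu$ (equivalently in $\varepsilon$) and then in the parameterization variable $\mathbf z$. At the lowest order $\mu^0$ the invariance reduces to the autonomous one, and the theorem's hypotheses \eqref{eq:nonres_cond}, \eqref{eq:ext_nearres_cond} guarantee that its solution is exactly the polynomial data $\mathbf W_0(\mathbf z)$, $\mathbf R_0(\mathbf z)$ of \eqref{eq:R0} constructed by Szalai~et~al.~\cite{article:SSM_SysID}; hence no new work is needed at this order. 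What remains is to determine the $\mathbf z$-independent correction $\mathbf w_1^{\mathbf 0}(\phi)$, $\mathbf r_1^{\mathbf 0}(\phi)$ appearing at order $\mu^{2M+2}=\varepsilon$.

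Evaluating the invariance condition at $\mathbf z=\mathbf 0$ at this order, and using $\mathbf W_0(\mathbf 0)=\mathbf 0$ together with the quadratic vanishing of $\mathbf G_{nlin}$ and $D_{\mathbf z}\mathbf W_0(\mathbf 0)=[\mathbf v_l,\mathbf v_{l+N}]$, yields the linear ODE
\begin{equation*}
\Omega\,\partial_{\phi}\mathbf w_1^{\mathbf 0}(\phi)-\mathbf A\,\mathbf w_1^{\mathbf 0}(\phi)+[\mathbf v_l,\mathbf v_{l+N}]\,\mathbf r_1^{\mathbf 0}(\phi)=\mathbf G_{ext}(\phi).
\end{equation*}
The single-harmonic driver \eqref{eq:forcing} carries only the two Fourier modes $e^{\pm i\Omega t}$, so I would seek $\mathbf w_1^{\mathbf 0}$ and $\mathbf r_1^{\mathbf 0}$ as $e^{\pm i\Omega t}$ trigonometric polynomials, which decouples this PDE into two independent linear algebraic systems, one per Fourier mode.

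Diagonalising each system with $\mathbf V^{-1}$ and using $\mathbf V^{-1}\mathbf A\mathbf V=\mathbf\Lambda$ converts it to the scalar equations
\begin{equation*}
(\pm i\Omega-\lambda_j)\,(\mathbf V^{-1}\mathbf a^{\pm})_j=\mathbf t_j\,\mathbf g_{ext}^{(\pm 1)}-\delta_{jl}\,b_1^{\pm}-\delta_{j,l+N}\,b_2^{\pm},\qquad j=1,\ldots,2N,
\end{equation*}
where $\mathbf a^{\pm},\mathbf b^{\pm}$ are the $e^{\pm i\Omega t}$ Fourier coefficients of $\mathbf w_1^{\mathbf 0}$ and $\mathbf r_1^{\mathbf 0}$. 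The potentially small denominators flagged in \eqref{eq:neqr_res_forcing} arise only in the two slots $j=l$ for the $e^{+i\Omega t}$ mode and $j=l+N$ for the $e^{-i\Omega t}$ mode; in exactly those slots I would choose the relevant entry of $\mathbf b^{\pm}$ to cancel the source, producing $r_c=\mathbf t_l\mathbf g_{ext}^{(1)}$ together with the complementary entry of $\mathbf b^{-}$ exhibiting the sign pattern of \eqref{eq:Red_dyn_forcing}. In every other slot the denominator is bounded away from zero and the equation is solved directly; reassembling and transforming back through $\mathbf V$ produces \eqref{eq:SSM_nearres_sol}, with the matrices $\mathbf S^{\pm}$ of \eqref{eq:aux_matrix} encoding precisely the act of zeroing the near-resonant slot before the inversion by $(\pm i\Omega\mathbf I-\mathbf\Lambda)^{-1}$.

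The main bookkeeping obstacle is to check that no further $\mathcal O(\mu^{2M+2})$ contributions to the invariance equation are missed through cross-couplings between the autonomous data $(\mathbf W_0,\mathbf R_0)$ and the correction $(\mathbf w_1^{\mathbf 0},\mathbf r_1^{\mathbf 0})$. Since $\mathbf W_0$ and $\mathbf G_{nlin}$ vanish at $\mathbf z=\mathbf 0$ and $\mathbf w_1^{\mathbf 0}$ is $\mathbf z$-independent, every such cross-term carries at least one additional factor of $|\mathbf z|$ after the rescaling \eqref{eq:rescaling} and therefore falls into the $\mathcal O(\mu^{2M+3})$ remainder. This closes the coefficient comparison in the spirit of Appendix~\ref{app:der_res1} used for Lemma~\ref{lem:gen_para}, and establishes that the $\mathbf w_1^{\mathbf 0}$ and $\mathbf r_1^{\mathbf 0}$ constructed above coincide with \eqref{eq:SSM_nearres_sol}--\eqref{eq:Red_dyn_forcing}.
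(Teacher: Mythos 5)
Your proposal follows essentially the same route as the paper's Appendix A2: substitute the rescaled expansion into the invariance condition, isolate the $\mathcal{O}(\mu^{2M+2})$, $\mathbf{z}$-independent balance, diagonalize with $\mathbf{V}^{-1}$, and choose $\mathbf{r}_1^{\mathbf{0}}$ to cancel the source in exactly the near-resonant slots $j=l$ (for $e^{+i\Omega t}$) and $j=l+N$ (for $e^{-i\Omega t}$), with $\mathbf{S}^{\pm}$ encoding that removal. The only cosmetic difference is that you solve the resulting periodic linear ODE by a direct Fourier ansatz where the paper invokes Duhamel's principle; these are equivalent here, and your closing check that cross-terms fall into the $\mathcal{O}(\mu^{2M+3})$ remainder matches the paper's error bookkeeping.
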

		
\begin{proof}
Since the non-resonance conditions~\eqref{eq:nonres_cond} hold the existence of the SSM can be guaranteed. Given that the additional non-resonance conditions~\eqref{eq:ext_nearres_cond} also hold, the result from Szalai~et~al.~\cite{article:SSM_SysID} also applies and we can select the parameterization variable such that the reduced dynamics of the autonomous limit of system~\eqref{eq:sys_phys} is of the form~\eqref{eq:R0}. Substituting the series expansion~\eqref{eq:expans_SSM+dyn} and the scaling~\eqref{eq:rescaling} into the invariance condition~\eqref{eq:inv_cond_SSM} and comparing terms of equal order in $\mu$, we obtain eqs.~\eqref{eq:SSM_nearres} and~\eqref{eq:dynamics_R_nearres}. We solve the arising equation at order $\mu^{2M+2}$ eliminating small denominators and obtain the explicit equations for $\mathbf{w}^{\mathbf{0}}_1$ and $\mathbf{r}^{\mathbf{0}}_1$ (cf. eqs.~\eqref{eq:SSM_nearres_sol} and~\eqref{eq:Red_dyn_forcing}). We give the detailed derivations in Appendix~\ref{app:der_res2}. 
\end{proof}

\begin{remark}
The constant $r_c$ in eq.~\eqref{eq:Red_dyn_forcing} is the component of the forcing vector $\mathbf{f}$ in eq.~\eqref{eq:forcing} falling in the subspace $E_l$ defined in~\eqref{eq:2D_eigspace}.
\end{remark}

\begin{remark}
Since we assume nonzero real part for the eigenvalues $\lambda_j$ (cf. condition~\eqref{eq:cond_eigval}), the inverse in~\eqref{eq:SSM_nearres_sol} is nonsingular. By construction, the matrices $\mathbf{S}^{\pm}$ cancel out the terms with small denominators in the parameterization.
\end{remark}

\begin{remark}
The non-resonance conditions~\eqref{eq:ext_nearres_cond} are violated for internally resonant structures. In this case, the system dynamics cannot be reduced to a two-dimensional SSM; rather, a higher-dimensional SSM needs to be constructed. Specific formulas for the reduction of an autonomous system to a higher-dimensional SSM ($\mathbf{W}_0$ and $\mathbf{R}_0$, for  $s\!>\!1$) have not yet been obtained in the literature, even though they can, in principle,  be deduced from the invariance condition~\eqref{eq:inv_cond_SSM}. 
\end{remark}

Theorem~\ref{Thm:auto_SSM} leads to have the following corollary.

\begin{corollary}
The eigenvectors~$\mathbf{v}_j$ can be normalized such that $r_c$ is purely imaginary. 
\label{cor:eig_vecs}
\end{corollary}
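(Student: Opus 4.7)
The plan is to exploit the freedom in scaling the eigenvectors~$\mathbf{v}_l$. First I recall from Theorem~\ref{Thm:auto_SSM} that $r_c=\mathbf{t}_l\,\mathbf{g}_{ext}^{(1)}$, where $\mathbf{t}_l$ is the $l$-th row of $\mathbf{V}^{-1}$. The eigenvector~$\mathbf{v}_l$ is only determined by the eigenvalue problem up to a nonzero complex scalar, because $\lambda_l$ is a simple eigenvalue of the semisimple matrix $\mathbf{A}$ (an eigenvalue with a one-dimensional eigenspace by the underdamping and pairing assumption~\eqref{eq:cond_eigval}). So for any $\alpha\in\mathbb{C}\setminus\{0\}$, the vector $\alpha\mathbf{v}_l$ is an equally valid choice, corresponding to replacing the mode shape~$\mathbf{e}_l$ in~\eqref{eq:mode_shp} by~$\alpha\mathbf{e}_l$. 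To preserve the complex-conjugate pairing $\mathbf{v}_{l+N}=\overline{\mathbf{v}}_l$ of~\eqref{eq:mode_shp}, I simultaneously rescale $\mathbf{v}_{l+N}\mapsto\overline{\alpha}\,\mathbf{v}_{l+N}$.

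Second, I track how this rescaling propagates through $\mathbf{V}^{-1}$. Since only the $l$-th and $(l+N)$-th columns of $\mathbf{V}$ are affected, the biorthogonality relation $\mathbf{V}^{-1}\mathbf{V}=\mathbf{I}$ forces the $l$-th row of $\mathbf{V}^{-1}$ to scale as $\mathbf{t}_l\mapsto\mathbf{t}_l/\alpha$ (and the $(l+N)$-th as $\mathbf{t}_{l+N}/\overline{\alpha}$), while all other rows are unchanged. Consequently, under this renormalization,
\begin{equation}
r_c\ \mapsto\ \frac{1}{\alpha}\,\mathbf{t}_l\,\mathbf{g}_{ext}^{(1)}\ =\ \frac{r_c}{\alpha}.
\end{equation}

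Third, it only remains to pick $\alpha$ so that $r_c/\alpha\in i\mathbb{R}$. If $r_c=0$ the claim is trivial; otherwise, writing $r_c=|r_c|e^{i\theta}$, the choice $\alpha=-i\,e^{i\theta}$ yields $r_c/\alpha=i|r_c|$, which is purely imaginary. Observe that the modulus of $\alpha$ is free, so the normalization of $\mathbf{v}_l$ is still available to fix any convenient magnitude convention.

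The main (and really only) thing to verify carefully is that this renormalization does not conflict with any other normalization implicitly used earlier; it does not, because the statement of Theorem~\ref{Thm:auto_SSM} is covariant under the substitution $\mathbf{v}_l\mapsto\alpha\mathbf{v}_l,~\mathbf{v}_{l+N}\mapsto\overline{\alpha}\mathbf{v}_{l+N}$: the parameterization coefficients and the coefficients~$\beta_m$ in~\eqref{eq:R0} simply transform accordingly (as a reparameterization $z_l\mapsto z_l/\alpha$), without spoiling any of the formulas used in the proof.
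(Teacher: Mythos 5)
Your proof is correct and follows essentially the same route as the paper: both exploit the complex scaling freedom of the eigenvectors, observe that $\mathbf{t}_l$ (hence $r_c$) scales by the reciprocal factor, and choose the phase of that factor to rotate $r_c$ onto the imaginary axis. Your version is in fact slightly more careful than the paper's, which multiplies all of $\mathbf{V}$ by $e^{i\varphi}$ (thereby breaking the conjugate pairing $\mathbf{v}_{j+N}=\overline{\mathbf{v}}_j$ of~\eqref{eq:mode_shp}), whereas your rescaling $\mathbf{v}_l\mapsto\alpha\mathbf{v}_l$, $\mathbf{v}_{l+N}\mapsto\overline{\alpha}\,\mathbf{v}_{l+N}$ preserves it and you additionally verify covariance of the reduced dynamics under the induced reparameterization $z_l\mapsto z_l/\alpha$.
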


\begin{proof}
	The proof relies on the fact that we can multiply the eigenvectors with a complex constant such that eq.~\eqref{eq:decomplex_r} holds. We detail this in Appendix~\ref{app:der_res3}. 
\end{proof} 

\begin{remark}
In the case of purely symmetric system matrices in~\eqref{eq:sys_phys} ($\mathbf{N}\!=\!\mathbf{0}$ and $\mathbf{G}\!=\!\mathbf{0}$) and structural damping ($\mathbf{C}=\alpha_m \mathbf{M}+\alpha_k\mathbf{K}~~\alpha_k,\alpha_m\in\mathbb{R} $), the mode shapes~$\mathbf{e}_j$~(cf. eq.~\eqref{eq:mode_shp}) can be mass-normalized, i.e. for the matrix of mode shapes $\mathbf{E}\!=\!\left[\mathbf{e}_1,...,\mathbf{e}_N\right]$ 
\begin{equation}
\mathbf{E}^{-1}\mathbf{ME}=\mathbf{I},
\label{eq:massnormalized}
\end{equation}
holds. Then the constant $r_c$ turns out to be always purely imaginary, which we also derive in Appendix~\ref{app:der_res3}.
\label{rmk:modeshapes}     
\end{remark}

Based on Corollary~\ref{cor:eig_vecs}, we can assume a purely imaginary constant $r_c$ without loss of generality. We denote the imaginary part of $r_c$ by $r$, i.e.,
\begin{equation}
r=\mbox{Im}(r_c).
\label{eq:decomplex_r}
\end{equation}
To determine the steady state response of~\eqref{eq:sys_phys}, we seek for $T$-periodic orbits of~the reduced dynamics~\eqref{eq:dynamics_R_nearres}. To this end, we transform the parameterization variables to polar coordinates by letting
\begin{equation}
z_l=\rho e^{i\theta}.
\label{eq:pol_trans}
\end{equation}
Furthermore, we separate the real and imaginary parts of the reduced dynamics~\eqref{eq:R0}  as 
\begin{subequations}
	\begin{align}
	\mbox{Re}(\mathbf{R}_0(\mathbf{z}))=a(\rho)&=\mbox{Re}(\lambda_l)\rho+\sum_{m=1}^M \mbox{Re}(\beta_m)\rho^{2m+1},
	\label{eq:rel_dyn}
	\\
	\frac{1}{\rho}\mbox{Im}(\mathbf{R}_0(\mathbf{z}))=b(\rho)&=\mbox{Im}(\lambda_l)+\sum_{m=1}^M \mbox{Im}(\beta_m)\rho^{2m}.
	\label{eq:Backbone}
	\end{align}
\end{subequations}
By formula~\eqref{eq:Red_dyn_forcing}, if the forcing vector $\mathbf{f}$ is perpendicular to the subspace $E_l$, then $r_c$ is zero. In that case, system~\eqref{eq:dynamics_R_nearres} has a fixed point at the origin, which is asymptotically stable, because of the conditions~\eqref{eq:cond_eigval}. In general, however, $r$ is nonzero, in which case we obtain the following:
\begin{theorem}
  With the transformation~\eqref{eq:pol_trans} and the notation introduced in~\eqref{eq:rel_dyn} and~\eqref{eq:Backbone}, the following specific expressions for $T$-periodic orbits of the reduced dynamics~\eqref{eq:dynamics_R_nearres} on the time dependent SSM~$W(E_l)$ for nonzero $r$ hold:
\begin{enumerate}[(i)]
	\item \textit{Amplitude of the periodic response}: The amplitudes of the $T$-periodic orbits of \eqref{eq:dynamics_R_nearres} are given by the zeros of the equation
	\begin{equation}
	f(\rho,\Omega):=\left[a(\rho)\right]^2+\left[b(\rho)-\Omega\right]^2\rho^2-\varepsilon^2r^2.
	\label{eq:ploy2solvem}
	\end{equation}
	\item \textit{Phase shift of the periodic response}: For a given amplitude $\rho$ of the periodic response, the phase shift $\psi$ between the $T$-periodic orbit and the external forcing $\mathbf{f}_{ext}$ is 
	\begin{equation}
	\psi=\arccos \left( \frac{\left[\Omega-b(\rho)\right]\rho}{\varepsilon r}\right).
	\label{eq:phase_ss}
	\end{equation}
	\item \textit{Stability of the periodic response}: The stability of the $T$-periodic response with amplitude $\rho$ is determined by the eigenvalues of the Jacobian
	\begin{eqnarray}
	\mathbf{J}(\rho)&=
	\begin{bmatrix}
	\frac{\partial a(\rho)}{\partial \rho} &
	\left[\Omega-b(\rho)\right]\rho\\
	\frac{\partial b(\rho)}{\partial \rho}  -\frac{\Omega-b(\rho)}{\rho}&
	\frac{a(\rho)}{\rho}
	\end{bmatrix}.
	\label{eq:jacobian}
	\end{eqnarray}
\end{enumerate}
\end{theorem}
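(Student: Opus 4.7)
The plan is to reduce the proof to the phase-plane analysis of a two-dimensional autonomous ODE obtained from the scalar complex reduced equation on $W(E_l)$. Using Corollary~\ref{cor:eig_vecs} (and the definition $r = \mathrm{Im}(r_c)$), the reduced dynamics from Theorem~\ref{Thm:auto_SSM} read
\begin{equation*}
\dot z_l = \lambda_l z_l + \sum_{m=1}^M \beta_m z_l^{m+1}\bar z_l^{m} + i\varepsilon r\, e^{i\Omega t} + \mathcal{O}(\mu^{2M+3}).
\end{equation*}
Substituting the polar ansatz \eqref{eq:pol_trans}, multiplying by $e^{-i\theta}$ and splitting into real and imaginary parts gives, using the definitions \eqref{eq:rel_dyn}--\eqref{eq:Backbone},
\begin{equation*}
\dot\rho = a(\rho) - \varepsilon r\sin(\Omega t-\theta), \qquad \rho\dot\theta = \rho\, b(\rho) + \varepsilon r\cos(\Omega t-\theta).
\end{equation*}

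Next I would pass to the co-rotating phase variable $\psi = \theta - \Omega t$, which renders the system autonomous in $(\rho,\psi)$ and turns $T$-periodic orbits of the non-autonomous reduced system into fixed points $(\rho_*,\psi_*)$. Setting $\dot\rho=\dot\psi=0$ yields the two algebraic conditions
\begin{equation*}
a(\rho_*) = -\varepsilon r\sin\psi_*, \qquad [\Omega-b(\rho_*)]\rho_* = \varepsilon r\cos\psi_*.
\end{equation*}
Squaring and adding eliminates $\psi_*$ and produces exactly the equation $f(\rho_*,\Omega)=0$ of \eqref{eq:ploy2solvem}, proving (i). Statement (ii) follows by inverting the cosine relation, noting that $\arccos$ delivers the correct phase lag modulo sign of $\sin\psi_*$ (which is fixed by $a(\rho_*)$).

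For (iii), I would compute the Jacobian of the $(\rho,\psi)$ vector field directly. The partial derivatives produce expressions involving $\varepsilon r\cos\psi_*$ and $\varepsilon r\sin\psi_*$; substituting the fixed-point relations above replaces these by $[\Omega-b(\rho_*)]\rho_*$ and $-a(\rho_*)$ respectively, yielding the explicit matrix $\mathbf J(\rho)$ of \eqref{eq:jacobian}. The $1/\rho$ in the $(2,1)$ and $(2,2)$ entries arises naturally when differentiating the $\dot\psi$ equation, which contains the factor $1/\rho$ from dividing the $\rho\dot\theta$ equation through.

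The main obstacle is purely bookkeeping: getting the signs right in the Jacobian depends on the convention chosen for $\psi$ (phase lag versus phase lead) and on consistently substituting the fixed-point identities at the right moment so that the $\varepsilon r$ factors disappear in favor of $a(\rho)$ and $[\Omega-b(\rho)]\rho$. No new analytic idea is needed; the higher-order $\mathcal{O}(\mu^{2M+3})$ remainder is absorbed into the implicit understanding that the stated formulas are the leading-order approximation consistent with Theorem~\ref{Thm:auto_SSM}.
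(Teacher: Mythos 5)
Your proposal is correct and follows essentially the same route as the paper's own derivation in Appendix A3: transform to polar coordinates, pass to the co-rotating phase $\psi=\theta-\Omega t$, identify $T$-periodic orbits with fixed points of the autonomous $(\rho,\psi)$ system, eliminate the trigonometric terms by squaring and adding, and evaluate the Jacobian at the fixed point using the steady-state identities $\varepsilon r\sin\psi_*=-a(\rho_*)$ and $\varepsilon r\cos\psi_*=[\Omega-b(\rho_*)]\rho_*$. The computed Jacobian entries match \eqref{eq:jacobian} exactly, so no gap remains.
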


\begin{proof}
This result can be deduced by substitution of the transformation~\eqref{eq:pol_trans} into the reduced dynamics~\eqref{eq:dynamics_R_nearres} and solving the resulting equations for $T$-periodic orbits. We carry out these computations in detail in Appendix~\ref{app:der_res3}.
\end{proof}

The constants $\beta_m$, necessary to compute $a(\rho)$ and $b(\rho)$, can be obtained from the invariance of the SSM (cf. eq.~\eqref{eq:inv_cond_SSM}). Depending on the order of the SSM ($M$) specific formulas for $\beta_m$ can be taken from Szalai~et~al.~\cite{article:SSM_SysID} or Appendix~\ref{app:Coeff_Szalai} ($M\!=\!1$), the \textsc{Matlab} script provided as electronic supplementary material ($M\!=\!2$) or the automated computation package of Ponsioen et al.~\cite{Sten_auto_SSM}.

 \section{Analytic results on backbone curve, periodic responses and their stability}

Having derived condensed formulas for the amplitude~\eqref{eq:ploy2solvem} and the stability~\eqref{eq:jacobian} of the forced response of system~\eqref{eq:sys_phys}, we can now analytically compute backbone curves and stability regions. Furthermore, we obtain below the forced response in physical coordinates.  

\subsection{Backbone curve}

As mentioned in the Introduction various definitions of the backbone curve can be found in the literature. The definition by Klotter~\cite{Klotter}, as the frequency-amplitude relationship of the conservative unforced limit, was adopted by Rosenberg and Atkinson~\cite{rosenberg1959natural}. This definition, however, has two major drawbacks. First, a general justification for the relevance of this curve for the response of the forced-damped system~\eqref{eq:sys_phys} is not available to the best of our knowledge. Furthermore, due to the no-damping assumption, it is challenging to observe this curve experimentally. The definition by Nayfeh and Mook~\cite{nayfeh_nonlinear} and Cveticanin~et al.~\cite{cveticanin2017dynamics} of the backbone curve as the curve connecting points of maximal response amplitude as a function of an external forcing frequency, defines a relevant and experimentally observable curve. We formalize this definition here as follows.
\begin{definition}
	The \textit{backbone curve} of the mechanical system~\eqref{eq:sys_phys} is the curve of maximal amplitude of the periodic response on the SSM~\eqref{eq:SSM_nearres} as function of the frequency of the external forcing~\eqref{eq:forcing}.
\label{def:backbone} 
\end{definition}

The function~\eqref{eq:ploy2solvem} relates implicitly the response amplitude $\rho$ with the forcing amplitude $r$ and frequency $\Omega$, and hence summarzises information about a whole family of response curves. The maximal amplitude location of each such curve is a single point on the backbone curve  by Definition~\ref{def:backbone}.  By  Definition~\ref{def:backbone}, points on the backbone curve can be identified by equating the derivative of the  amplitudes with respect to the forcing frequency $\Omega$ with zero. To find these locations, first note that implicit differentiation of~\eqref{eq:ploy2solvem} gives

\begin{equation}
\frac{\partial f(\rho,\Omega)}{\partial \Omega}=-2(b(\rho)-\Omega)\rho^2.
\label{eq:partial_f_Bbk}
\end{equation} 

\noindent To identify the frequency $\Omega\!=\!\Omega_{max}$, at which the amplitude of the forced response of~\eqref{eq:sys_phys} is at a maximum, we equate the expression~\eqref{eq:partial_f_Bbk} with zero. Solving for $\Omega_{max}$ from the resulting equation, we obtain

\begin{equation}
\Omega_{max}(\rho_{max})=b(\rho_{max})=\mbox{Im}(\lambda_l)+\sum_{m=1}^M \mbox{Im}(\beta_m)\rho^{2m}_{max}.
\label{eq:freq_of_backbone}
\end{equation}

The maximal response amplitude $\rho_{max}$ parameterizes the backbone curve. The phase shift between response and excitation along the backbone curve is given by

\begin{equation}
\psi(\rho_{max})=\frac{\pi}{2},
\label{eq:pahse_lag_at_Bbk}
\end{equation} 

\noindent as one obtains from~\eqref{eq:phase_ss} by substituting $\rho\!=\!\rho_{max}$ and $\Omega=\Omega_{max}$. Peeters et~al.~\cite{peeters_backbone} derived a similar 90-degree phase lag, under the assumption of structural damping. Equation~\eqref{eq:pahse_lag_at_Bbk} confirms this conclusion for any damping, that is a polynomial function of positions and velocities.

The SSM construction described by Haller and Ponsioen~\cite{article:Haller_SSMbasics} for dissipative systems does not apply to that limit as the $\mathbf{q}\!=\!0$ equilibrium is not hyperbolic in that case. The Lyapunov subcenter-manifold theorem for autonomous conservative systems (cf. Kelley~\cite{kelley_LSM}), however, guarantees the existence of an unique analytic invariant manifold tangent to the modal subspace~\eqref{eq:2D_eigspace} under appropriate non-resonance conditions. These Lyapunov subcenter-mainfolds (LSMs) are then filled with periodic orbits. If, in addition to the forcing, the linear and nonlinear damping are also of first order in $\varepsilon$ , the $\varepsilon \rightarrow 0$ limit of system~\eqref{eq:sys_phys} is conservative and unforced. Then, by the uniqueness of the LSM (cf. Kelley~\cite{kelley_LSM}) and the continuity of the expansions~\eqref{eq:expans_SSM+dyn} of the SSM, it is reasonable to expect that the SSM limits on the LSM. A mathematical proof for this expectation, however, is not available yet. 

We obtain the conservative limit of the reduced dynamics \eqref{eq:dynamics_R_nearres}  by taking the limits $\varepsilon \!\rightarrow \! 0$, $\mbox{Re}(\lambda_l)\!\rightarrow \! 0$ and $\mbox{Re}(\beta_m)\!\rightarrow \! 0$ ($m=1,...,M$). Transforming this limit to poolar coordinates, we obtain the same frequency-amplitude relationship as given by the backbone curve~\eqref{eq:freq_of_backbone}. Therefore we can confirm analytically that the frequency-amplitude relationship of the conservative limit is an $\mathcal{O}(\mu^{2M+3})$ approximation to actual backbone curve. The closeness of the two curves assumed by, e.g, the resonance decay method, has only been argued heuristically by Peeters et al.~\cite{peeters_backbone}.

We further note that the backbone curve~\eqref{eq:freq_of_backbone} is the same as derived by Szalai et al.~\cite{article:SSM_SysID}, who define the backbone curve as the frequency-amplitude relationship of the decaying response along an SSM. From their calculations however, the relevance of this curve to the forced response of system~\eqref{eq:sys_phys} is not immediate. Our derivations clarify here this relevance.

The backbone curve~\eqref{eq:freq_of_backbone} is independent of the forcing amplitude. This fact is clear for the undamped and unforced frequency-amplitude relationship, as there is no forcing in the system, but the same result also follows directly from our analytical calculations for the damped-forced mechanical system~\eqref{eq:sys_phys}. The forcing amplitude determines the location along the backbone curve, where the maximum of the response curve can be found. To obtain the maximum response amplitude for a given forcing equation \eqref{eq:ploy2solvem} has to be solved. Along the backbone curve~\eqref{eq:ploy2solvem} simplifies to 

\begin{equation}
f(\rho_{max})=\left[\mbox{Re}(\lambda_l)\rho_{max}+\sum_{m=1}^M \mbox{Re}(\beta_m)\rho_{max}^{2m+1}\right]^2-\varepsilon^2r^2,
\label{eq:max_at_backbone}
\end{equation}

\noindent which can have multiple solutions for $\rho_{max}$.

We obtain a parameterization of the forced response in the $(\rho,\Omega)$ parameter space, by solving eq.~\eqref{eq:ploy2solvem} for $\Omega$   

\begin{equation}
\Omega(\rho)=b(\rho)\pm \frac{1}{\rho} \sqrt{\varepsilon^2 r^2-a(\rho)^2},
\label{eq:Omega_of_rho}
\end{equation}

\noindent where only real values of $\Omega$ are meaningful. Equation \eqref{eq:Omega_of_rho} reveals that the forced response is symmetric with respect to the backbone curve (cf. Fig.~\ref{fig:sketch_of_f_res}). For a given amplitude $\rho\!=\!\rho_0$  one or two forced responses with that amplitude may exist.  If there is only one such response, it must lie on the backbone curve~\eqref{eq:freq_of_backbone}. 

In practice, the forcing frequency $\Omega$ is known and the amplitude $\rho$ needs to be determined as a function of $\Omega$, by solving for the zeros of the function~\eqref{eq:ploy2solvem}.  If the order of the SSM $W(E_l)$ is three ($M\!=\!1$), we can solve \eqref{eq:ploy2solvem} for $\rho$ analytically. For higher-order approximations to $W(E_l)$, such analytic solution is unavailable and hence numerical solvers must be used.

\subsection{Stability of the periodic response}

To obtain stability regions of the forced response we apply the Routh-Hurwitz criterion to the Jacobian~\eqref{eq:jacobian}. We conclude that

\begin{eqnarray}
&\mbox{Re}(\lambda_l)+\sum_{m=1}^M (m+1)\mbox{Re}(\beta_m)\rho^{2m}<0,
\label{eqn:stab_cond_A}
\\
&	\frac{a(\rho)}{\rho} \frac{\partial a(\rho)}{\partial \rho} 	
<
\left[\Omega-b(\rho)\right]
\left[ \rho \frac{\partial b(\rho)}{\partial \rho}  -\left(\Omega-b(\rho)\right)\right],
\label{eqn:stab_cond_B}
\end{eqnarray}

\noindent must hold to ensure the asymptotic stability of the forced response. At bifurcations of the response, the inequalities \eqref{eqn:stab_cond_A} and \eqref{eqn:stab_cond_B} become equalities. According to~\eqref{eqn:stab_cond_A}, up to $M$ bifurcation values for $\rho_{crit}$ may arise. These bifurcations appear along straight lines in the $(\rho,\Omega)$ parameter space. From eq.~\eqref{eqn:stab_cond_B}, we obtain that these lines satisfy the equations

\begin{equation}
\Omega_{crit}^{\pm}(\rho)=b(\rho)+\sum_{m=1}^M m\mbox{Im}(\beta_m)\rho^{2m}\pm
\sqrt{\left[ \sum_{m=1}^M m\mbox{Im}(\beta_m)\rho^{2m} \right]^2-\frac{a(\rho)}{\rho} \frac{\partial a(\rho)}{\partial \rho} }.
\label{eq:Omega_crit}
\end{equation}

\noindent These functions divide the $(\rho,\Omega)$ parameter into stable and unstable regions, as indicated in Fig.~\ref{fig:sketch_of_f_res}. If the real parts of the parameters $\beta_m$ are zero or small ($a(\rho)\!\approx\!0$), the graph of $\Omega_{crit}^-$ coincides with the backbone curve (cf.~Fig.~\ref{fig:sketch_of_f_res}).

\begin{figure}
	\begin{center}
		
	\input{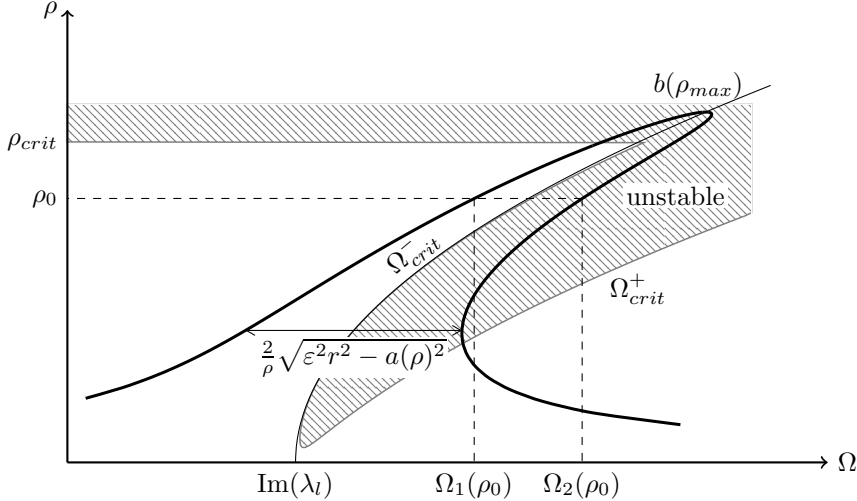}
		
	\end{center}
	\caption{Sketch of a typical forced response for a third-order SSM approximation ($M\!=\!1$) and a nonlinearity with a stiffening effect ($\mbox{Im}(\beta_1)\!>\!0$). }
	\label{fig:sketch_of_f_res}
\end{figure}

\subsection{The periodic response in physical coordinates}

Periodic orbits of~\eqref{eq:dynamics_R_nearres}  are related to periodic orbits in the original  physical coordinates via the parameterization~\eqref{eq:SSM_nearres}. Along the periodic response, the parameterization variable is a complex exponential with amplitude $\rho$ and with the frequency equal to the excitation frequency $\Omega$ (cf. eq.~\eqref{eq:pol_trans}). We insert this exponential into the leading-order expression $\mathbf{W}_0$ for the SSM $W(E_l)$. Since $\mathbf{W}_0$ is a polynomial of $z_l$ and $\overline{z}_l$ (cf. eq.~\eqref{eq:multiindex_exp}), substitution of complex exponential creates higher harmonics ($n\Omega$), whereas the amplitude $\rho$ is exponentiated.

Through the time-varying parameterization $\mathbf{w}_1^{\mathbf{0}}$, terms for the first harmonic  arise (cf.~eq.~\eqref{eq:SSM_nearres_sol}), with their amplitudes given by
\begin{equation}
\mathbf{W}^{\pm}=\mathbf{V S}^{\pm}
(\pm i \Omega\mathbf{I}-\mathbf{\Lambda})^{-1}\mathbf{V}^{-1}
\mathbf{g}^{\pm 1}. 
\end{equation}
With that notation we obtain for the complex amplitudes $\mathbf{x}_{j\Omega}$ of the j-th harmonic of the forcing frequency $\Omega$ as
\begin{equation}
\mathbf{x}_{j\Omega}=
\begin{cases}
\sum_{m=0}^{M} \rho^{2m+j} e^{ji\psi(\rho)}\mathbf{w}_0^{(m+j,m)}
+\delta_{1j}\varepsilon\mathbf{W}^+, 
&
0 \leq j \leq M,
\\[0.2mm]
\sum_{m=0}^{M} \rho^{2m+j} e^{-ji\psi(\rho)}\mathbf{w}_0^{(m,m+j)}
+\delta_{-1j} \varepsilon \mathbf{W}^-, 	  
&
-M \leq j<0,
\end{cases}
\label{eq:backtrans_time}
\end{equation}

\noindent where the coefficients~$\mathbf{w}_0^{(m+j,m)}$ are set to zero, if the corresponding coefficient is higher than the computed order of $\mathbf{W}_0$ ($2m+j\!>\! 2M+1$). From these formulas, one obtains the amplitudes and phases of the response for the fundamental ($|j|\!=\!1$) and superharmonic ($|j|\!>\!1$) frequencies. The case $j\!=\!0$ implies a static shift of the center of the steady state solution, which is a known phenomena for nonlinear system~\eqref{eq:sys_phys} with quadratic stiffness terms~(cf. Nayfeh and Mook~\cite{nayfeh_nonlinear}).

\section{Numerical examples}

We now demonstrate our SSM-based analytic results on forced responses and backbone curves on three numerical examples. The first is a two-degree-of-freedom oscillator introduced by Shaw and Pierre~\cite{SHAW+Pierre}, modified and further studied by Haller and Ponsioen \cite{article:Haller_SSMbasics} and Szalai et al.~\cite{article:SSM_SysID}. The nonlinearity in this oscillator arises from a single cubic spring. Our second example, taken from~Touz{\'e}~and~Amabili~\cite{touze_damped_NNM}, also has two degrees of freedom, but its nonlinearities are more complex, consisting of both quadratic and cubic terms.  To demonstrate the applications of our results to higher-dimensional systems, we select a chain of oscillators with five degrees of freedom for the third example.   

On these three examples, we compare our results with the second-order normal form approach of Neild and Wagg~\cite{Neild_2NF_firstresult} and with a normal-form type method of Touz{\'e}~and~Amabili~\cite{touze_damped_NNM}. Both methods assume that the mechanical system is expressed in modal coordinates and hence the linear part of the system is fully decoupled. 

The Neild-Wagg method introduces a time-dependent transformation to remove forcing terms from all modal coordinates whose eigenfrequencies are not in resonance with the forcing frequency. Afterwards, it identifies the resonant terms in the dynamics via harmonic balance. Two major differences to the present approach are the treatment of damping and the nonlinearities. Specifically, Neild and Wagg~\cite{Neild_2NF_firstresult} assume small nonlinearities and allow only small viscous damping.  Neild~et~al.~\cite{Neild_2ndNF_overview} also add an trivial dynamical equation for the time evaluation of the damping coefficients. Afterwards they carry out the normal form transformations for the enlarged system and obtain that the linear modal damping can be added to the final dynamical equations. Therefore, no transfer of linear damping between the modal coordinates induced by the nonlinearities can be captured. Next, the method employs the harmonic balance to approximate the amplitude of the forced response, which leads to an expression similar to eq.~\eqref{eq:ploy2solvem}. Stability conditions for the steady state solution can be found in~Wagg~and~Neild~\cite{wagg2014nonlinear} and a recent overview in~Neild~et~al.~\cite{Neild_2ndNF_overview}. 

In contrast the Touz{\'e}-Amabili method starts with the unforced and damped mechanical systems in modal coordinates with geometric (position-dependent) nonlinearities.  After a cubic transformation to normal-form type dynamical equations, they restrict their calculations to a subset of coordinates, called the master coordinates. The choice of the master coordinates is motivated heuristically. External forcing is then introduced directly into the normal form, representing simple forcing along non-physical, curvilinear coordinates. In addition, the forcing is assumed to be along the master coordinates only. Therefore, one can only achieve model reduction via this method, if the non-master modal coordinates are unforced, as we highlight in Example 3. We acknowledge the possibility to modify the Touz{\'e}-Amabili method to overcome this shortcoming  by neglecting inconvenient forcing terms. Such a reasoning, however, is not available in the literature and it is beyond the scope of the present study to modify existing methods. We, therefore, follow the method as it is stated in Touz{\'e}~and~Amabili~\cite{touze_damped_NNM}.   In analogy with~Kerschen~et~al.~\cite{kerschen2014modal} and Touz{\'e}~and~Amabili~\cite{touze_damped_NNM} we will obtain the forced response of the reduced dynamics via numerical continuation.  

To compare the accuracy of these two methods to ours, we use the \textsc{Matcont} toolbox~\cite{MATCONT} of~\textsc{Matlab} to calculate the periodic responses in the three examples directly. The result of the continuation are $T$-periodic orbits in the full phase space. As routinely done in the vibrations literature (cf. Kerschen~et~al.~\cite{kerschen2014modal}, Peeters~et~al.~\cite{peeters_backbone}, Neild~et~al.\cite{Neild_2ndNF_overview} and  Touz{\'e}~and~Amabili~\cite{touze_damped_NNM}) the maximal displacement along a modal direction is taken as modal amplitude of the first harmonic. To validate  the formulas for higher harmonics (cf.~\eqref{eq:backtrans_time} for $|j|\!>\!1$), we extract higher harmonics via the Fast Fourier Transformation of selected orbits.

\subsection{Modified Shaw-Pierre example}

Shown in Fig.~\ref{fig:Shaw_Pierre_ex}, this mechanical system was originally introduced by Shaw and Pierre~\cite{SHAW+Pierre}, with modifications appearing in Haller and Ponsioen~\cite{article:Haller_SSMbasics} and Szalai et al.~\cite{article:SSM_SysID}. Its equations of motion are

\begin{equation}
\begin{bmatrix}
m & 0\\
0 & m
\end{bmatrix}
\begin{bmatrix}
\ddot{q}_1\\
\ddot{q}_2
\end{bmatrix}
+
\begin{bmatrix}
c_1 + c_2& -c_2\\
-c_2 & c_1 + c_2
\end{bmatrix}
\begin{bmatrix}
\dot{q}_1\\
\dot{q}_2
\end{bmatrix}
+
\begin{bmatrix}
2k& -k\\
-k& 2k
\end{bmatrix}
\begin{bmatrix}
q_1\\
q_2
\end{bmatrix}
+
\begin{bmatrix}
\kappa q_1^3\\
0
\end{bmatrix}
=
\begin{bmatrix}
f_1\\
f_2
\end{bmatrix}.
\label{eq:SP_sys_eqm}
\end{equation}

\pgfdeclarepatternformonly[\GridSize]{MyGrid}{\pgfqpoint{-1pt}{-1pt}}{\pgfqpoint{4pt}{4pt}}{\pgfqpoint{\GridSize}{\GridSize}}%
{
	\pgfsetlinewidth{0.3pt}
	\pgfpathmoveto{\pgfqpoint{0pt}{0pt}}
	\pgfpathlineto{\pgfqpoint{0pt}{3.1pt}}
	\pgfpathmoveto{\pgfqpoint{0pt}{0pt}}
	\pgfpathlineto{\pgfqpoint{3.1pt}{0pt}}
	\pgfusepath{stroke}
}

\newdimen\GridSize
\tikzset{
	GridSize/.code={\GridSize=#1},
	GridSize=3pt
}

\begin{figure}
	\begin{center}
		
			\begin{tikzpicture}
		
		\tikzstyle{spring}=[thick,decorate,decoration={zigzag,pre length=0.3cm,post
			length=0.3cm,segment length=6}]
		\tikzstyle{damper}=[thick,decoration={markings,  
			mark connection node=dmp,
			mark=at position 0.45 with 
			{
				\node (dmp) [thick,inner sep=0pt,transform shape,rotate=-90,minimum
				width=8pt,minimum height=3pt,draw=none] {};
				\draw [thick] ($(dmp.north east)+(5pt,0)$) -- (dmp.south east) -- (dmp.south
				west) -- ($(dmp.north west)+(5pt,0)$);
				\draw [thick] ($(dmp.north)+(0,-3pt)$) -- ($(dmp.north)+(0,3pt)$);
			}
		}, decorate]
		
		\node[draw,outer sep=0pt,thick] (M1) [minimum width=1.5cm, minimum height=1cm] {} node[above]{$\qquad m$};
		\node[draw,outer sep=0pt,thick] (M2) at (3,0) [minimum width=1.5cm, minimum height=1cm] {} node[above] at (3,0){$\qquad m$};
		\draw[spring] ($(M1.east) - (0,0.25)$) -- ($(M2.west) - (0,0.25)$) 
		node [midway,below] {$k$};
		\draw[damper] ($(M1.east) + (0,0.25)$) -- ($(M2.west) + (0,0.25)$)
		node [midway,above] {\raisebox{0.1cm}{$c_2$}};
		
		
		\node[pattern=north east lines, pattern color=black] at (-2.375,-0.125) (LW) [minimum width=0.25cm,  minimum height=1.75cm] {};
		
		\node[pattern=north east lines, pattern color=black] at (1.5,-0.875) (BW) [minimum width=7.5cm,  minimum height=0.25cm] {};
		
		\node[pattern=north east lines, pattern color=black] at (5.375,-0.125) (RW) [minimum width=0.25cm,  minimum height=1.75cm] {};
		
		\draw[spring] ($(LW.east) - (0,0.125)$) -- ($(M1.west) - (0,0.25)$) node [midway,below] {$k,\kappa$};
		\draw[damper] ($(LW.east) + (0,0.375)$) -- ($(M1.west) + (0,0.25)$)
		node [midway,above] {\raisebox{0.1cm}{$c_1$}};
		
		\draw[spring] ($(M2.east) - (0,0.25)$) -- ($(RW.west) - (0,0.125)$) node [midway,below] {$k$};
		\draw[damper] ($(M2.east) + (0,0.25)$) -- ($(RW.west) + (0,0.375)$)
		node [midway,above] {\raisebox{0.1cm}{$c_1$}};
		
		\draw (-2.25,0.75)--(-2.25,-0.75)--(5.25,-0.75)--(5.25,0.75);
		
		\draw[thick] (-0.5,-0.625) circle (0.125);
		\draw[thick] (0.5,-0.625) circle (0.125);
		\draw[thick] (2.5,-0.625) circle (0.125);
		\draw[thick] (3.5,-0.625) circle (0.125);
		
		\draw[-triangle 45] (-0.5,0)--(0.5,0) node[midway,below]{$f_1$};
		\draw[-triangle 45] (2.5,0)--(3.5,0) node[midway,below]{$f_2$};
		
		\draw[thick,->] (-1.95,-0.4)-- (-1.15,-0.05) ;
		\draw[-latex] (-0.5,0.6)--(0.5,0.6) node[midway,above]{$q_1$};
		\draw[-latex] (2.5,0.6)--(3.5,0.6) node[midway,above]{$q_2$};
		\end{tikzpicture}
		
	\end{center}
	\caption{The modified Shaw-Pierre example discussed in~\cite{article:SSM_SysID}. We select the nondimensional parameters $m\!=\!1$, $k\!=\!1$, $c_1\!=\!\sqrt{3}c_2\!=\!0.003$ and $\kappa\!=\!0.5$. } 
	\label{fig:Shaw_Pierre_ex}
\end{figure}
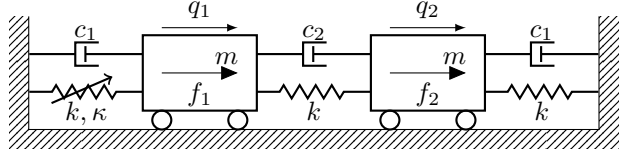

\noindent The system is of the general form~\eqref{eq:sys_phys} and hence the approach developed here applies. The eigenvalues and mode shapes of the linearized dynamics at $q_1\!=\!q_2\!=\!0$ are

\begin{equation*}
\begin{split}
\begin{array}{l}
\lambda_{1,3}=-D_1\omega_1\pm i \omega_1\sqrt{1-D_1^2}\\
\lambda_{2,4}=-D_2\omega_2\pm i \omega_2\sqrt{1-D_2^2}\\
\end{array}
,
\quad
\begin{array}{l}
\omega_1=\sqrt{\frac{k}{m}}, \\
\omega_2=\sqrt{\frac{3k}{m}},
\end{array}
\quad 
\begin{array}{l}
D_1=\frac{c_1}{2\sqrt{km}},\\
D_2=\frac{c_1+2c_2}{\sqrt{12km}},
\end{array}
\quad
\begin{array}{l}
\mathbf{e}_1=
\frac{1}{\sqrt{2}}
\begin{bmatrix}
1&
1
\end{bmatrix}^T
,\\
\mathbf{e}_2=
\frac{1}{\sqrt{2}}
\begin{bmatrix}
1 &
-1
\end{bmatrix}^T.
\end{array}
\end{split}
\end{equation*}

\noindent  For sufficiently small damping the  strengthened non-resonance conditions~\eqref{eq:ext_nearres_cond} hold. By choosing~\mbox{$c_1\!=\!\sqrt{3}c_2$}, the conditions~\eqref{eq:nonres_cond} are satisfied and hence two non-autonomous SSMs exist. The unforced limit of these SSMs and their reduced dynamics have already been calculated by Szalai~et~al.~\cite{article:SSM_SysID}. 

To apply the Touz{\'e}-Amabili method, we have to assume forcing along one of the modal coordinates only. First, we investigate forcing along the first modal coordinate ($f_1\!=\!f_2\!=\!\varepsilon/\sqrt{2}\cos(\Omega t)$) with the amplitude $\varepsilon\!=\!0.003$.  We plot the first and third  harmonics of the first modal amplitude ($p_{1,1\Omega}$ and $p_{1,3\Omega}$) in Fig.~\ref{fig:SP_modal_ampl1}. For comparison, we show the results obtained from the Neild-Wagg method, the Touz{\'e}-Amabili method and numerical continuation with the \textsc{Matlab} toolbox \textsc{Matcont}~\cite{MATCONT} in Fig.~\ref{fig:SP_modalampl1_harm1}, with the later serving as a benchmark to hit. We indicate unstable periodic orbits in dashed lines.

\begin{figure}[hbp!]
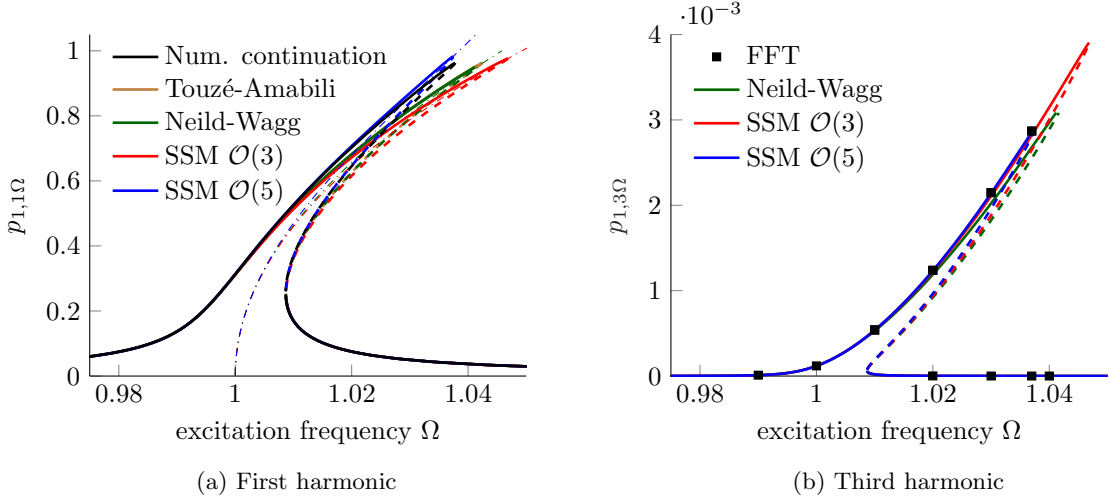

	\begin{center}
		\begin{subfigure}[b]{0.49\textwidth}
				\input{Pics/SP_p1_harm1.tex}
			\caption{First harmonic}
			\label{fig:SP_modalampl1_harm1}
		\end{subfigure}
		\begin{subfigure}[b]{0.49\textwidth}
				\input{Pics/SP_p1_harm3.tex}
			\caption{Third harmonic}
			\label{fig:SP_modalampl1_harm3}
		\end{subfigure}
	\end{center}
	\caption{Response curves (thick lines) and backbone curves (thin dash-dotted lines) for Example~1~under forcing of the first mode; solid lines imply stability and dashed lines instability of the forced response; parameters: $m\!=\!1$, $k\!=\!1$, $c_1\!=\!\sqrt{3}c_2\!=\!0.003$, $\kappa\!=\!0.5$, $f_1\!=\!f_2\!=\!\varepsilon/\sqrt{2}\cos(\Omega t)$ and $\varepsilon\!=\!0.003$. }
	\label{fig:SP_modal_ampl1}
\end{figure}

While all three methods give results close to the numerical continuation, the $\mathcal{O}(5)$ SSM is the most accurate. This approach however, is of higher order than the others. The Touz{\'e}-Amabili method can in principle be extended to higher orders in the coordinates but it assumes modal forcing. To improve the results of the Neild-Wagg method, one would also need to include higher-order terms in their perturbation  approach, which would complicate the calculations significantly. To our best knowledge, higher-order estimates have only been obtained for one-degree of freedom oscillators (cf.~Neild and Wagg~\cite{neild_general_freq_detun,wagg2014nonlinear}). Out of all third-order methods, the $\mathcal{O}(3)$ SSM computation gives the weakest result.

Touz{\'e} and Amabili~\cite{touze_damped_NNM} do not explicitly estimate the amplitudes of higher harmonics of the forced response of system~\eqref{eq:sys_phys}, hence the omission of the results from their method in Fig.~\ref{fig:SP_modalampl1_harm3}.  Note, that a periodic solution to their reduced dynamics contains fundamental and higher harmonics, which could be related to amplitudes in physical coordinates via their normal form transformation. Here, however, we follow the published results of Touz{\'e} and Amabili~\cite{touze_damped_NNM} without modifications.

The reference solution is generated by the Fast Fourier Transformation (FFT) of the continuation signal. Again, all four result agree closely, but the $\mathcal{O}(5)$ SSM method matches the reference solution the best. Due to the cubic nonlinearity, the modal amplitudes at even harmonics are zero for the accuracy investigated in this article.  

Next, we apply forcing along the second modal degree of freedom ($l\!=\!2$), by selecting \linebreak $ f_1\!=\!-f_2\!=\!\varepsilon/\sqrt{2}\cos(\Omega t) $ and $\varepsilon\!=\!0.01$. We show the  first and third harmonics of the computed forced response in Fig.~\ref{fig:SP_modal_ampl2}.  Again, the $\mathcal{O}(5)$ SSM approach approximates the benchmark solution most accurately. The results from the other methods are nearby and align closely with each other.

\begin{figure}[ht!]
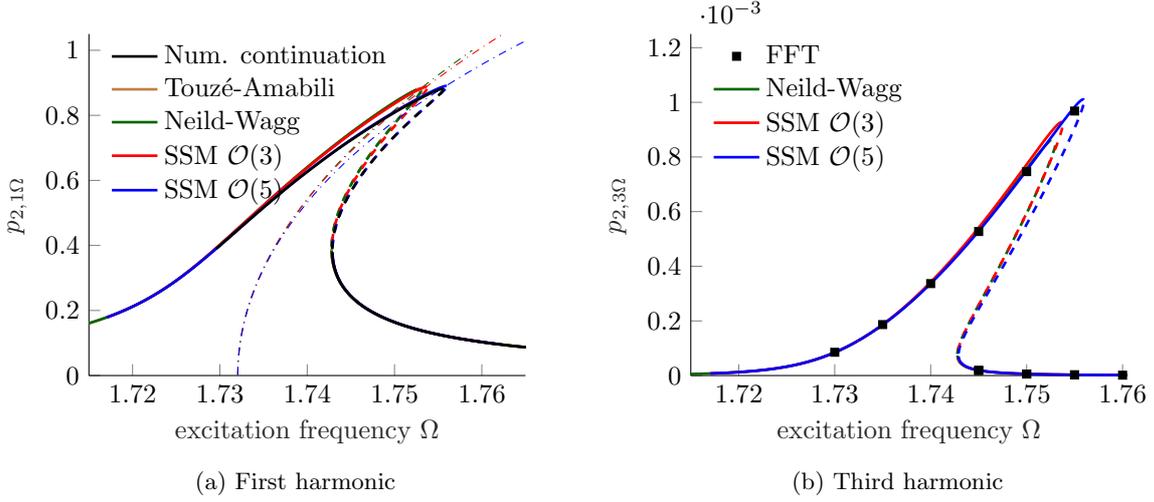

	\begin{center}
		\begin{subfigure}[b]{0.49\textwidth}
				\input{Pics/SP_p2_harm1.tex}
			\caption{First harmonic}
			\label{fig:SP_modalampl2_harm1}
		\end{subfigure}
		\begin{subfigure}[b]{0.49\textwidth}
				\input{Pics/SP_p2_harm3.tex}
			\caption{Third harmonic}
			\label{fig:SP_modalampl2_harm3}
		\end{subfigure}
	\end{center}
	\caption{Response curves (thick lines) and backbone curves (thin dash-dotted lines) for Example 1 under forcing of the second mode;  solid lines imply stability and dashed lines instability of the forced response; parameters:$m\!=\!1$, $k\!=\!1$, $c_1\!=\!\sqrt{3}c_2\!=\!0.003$, $\kappa\!=\!0.5$, $f_1\!=\!-f_2\!=\!\varepsilon/\sqrt{2}\cos(\Omega t)$ and $\varepsilon\!=\!0.01$.}
	\label{fig:SP_modal_ampl2}
\end{figure}

\subsection{Spring system}

Our second example involves a mass suspended via a vertical and a horizontal spring to the wall (cf.~Fig.~\ref{fig:Spring_sys_plot}). Touz{\'e}~et~al.~\cite{touze_springsys_cons} derive the equation of motion for this system up to third order. With viscous damping and nondimensional parameters, the equations of motion subject to horizontal forcing are 

\begin{equation}
\begin{split}
\ddot{q}_1+2D_1\omega_1\dot{q}_1+\omega_1^2q_1+
\frac{\omega_1^2}{2}(3q_1^2+q_2^2)+\omega_2^2q_1q_2+\frac{\omega_1^2+\omega_2^2}{2}q_1(q_1^2+q_2^2)&=F_1=f_1\cos(\Omega t),\\
\ddot{q}_2+2D_2\omega_2\dot{q}_2+\omega_2^2q_2+
\frac{\omega_2^2}{2}(3q_2^2+q_1^2)+\omega_1^2q_1q_2+\frac{\omega_1^2+\omega_2^2}{2}q_2(q_1^2+q_2^2)&=0.
\end{split}
\label{eq:spring_sys}
\end{equation}

\noindent We choose the form of the damping and forcing for a direct comparison with Touz{\'e} and Amabili~\cite{touze_damped_NNM}, but the theory developed here also applies to nonlinear damping and general forcing.

\begin{figure}[hbp!]
	\begin{center}
		
		\begin{tikzpicture}
		
		\tikzstyle{spring}=[thick,decorate,decoration={zigzag , pre length=0.6cm,post
			length=0.6cm,segment length=6}]
		\tikzstyle{springd}=[thick,decorate,dashed,decoration={zigzag , pre length=0.6cm,post
			length=0.6cm,segment length=6}]
		\tikzstyle{damper}=[thick,decoration={markings,  
			mark connection node=dmp,
			mark=at position 0.45 with 
			{
				\node (dmp) [thick,inner sep=0pt,transform shape,rotate=-90,minimum
				width=8pt,minimum height=3pt,draw=none] {};
				\draw [thick] ($(dmp.north east)+(5pt,0)$) -- (dmp.south east) -- (dmp.south
				west) -- ($(dmp.north west)+(5pt,0)$);
				\draw [thick] ($(dmp.north)+(0,-3pt)$) -- ($(dmp.north)+(0,3pt)$);
			}
		}, decorate]
		
		
		\node[draw,inner sep=0pt,thick,circle] at (2,2.5) (M)  [minimum size=0.5cm] {$m$};

		\node[pattern=north east lines, pattern color=black] at (-2.125,0.75) (LW) [minimum width=0.25cm,  minimum height=5cm] {};
		
		\node[pattern=north east lines, pattern color=black] at (0.25,-1.625) (BW) [minimum width=5cm,  minimum height=0.25cm] {};
		
		\draw[spring] ($(M.west) $) -- ($(LW.east) + (0,1)$) node [midway,above] {$k_1$} ;
		
		\draw[spring] ($(M.south) $) -- ($(BW.north) + (1,0.0)$) node [midway,right] {$k_2$};

		\node[draw,inner sep=0pt,thick,circle,dashed] at (1.375,1.875) (Md)  [minimum size=0.5cm] { };
		\draw[springd] ($(Md.west) $) -- ($(LW.east) + (0,1)$);
		
		\draw[springd] ($(Md.south) $) -- ($(BW.north) + (1,0.0)$) ;
		\draw[-triangle 45] ($(M.east) $)--($(M.east) +(1,0)$) node[midway,above]{$F_1$};	
		
		\draw (-2,-1.5)-- (2.75,-1.5); 
		\draw  (-2,-1.5)-- (-2,3.25); 
		
		\draw[-latex] (-1.5,-1.1)--(-1.5,-0.5) node[anchor=east]{$q_2$};
		\draw[-latex] (-1.6,-1)--(-1,-1) node[anchor=north]{$q_1$};
		
		

		\end{tikzpicture}
		
	\end{center}
	\caption{The mechanical system in Example 2, discussed by Touz{\'e} and Amabili~\cite{touze_damped_NNM}. }
	\label{fig:spring_sys}
\end{figure}
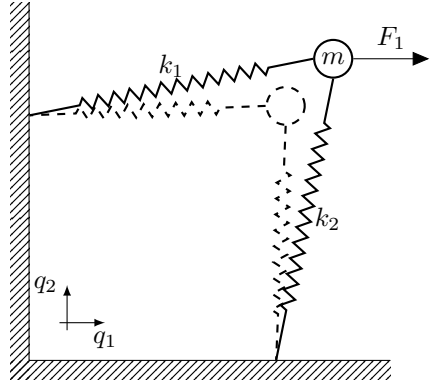

We select the parameters $\omega_1\!=\!2$, $\omega_2\!=\!4.5$, $D_1\!=\!0.01$, $D_2\!=\!0.2$ and $f_1$ is set to $0.02$. In Fig.~\ref{fig:Spring_sys_plot}, we plot the amplitude of the coordinate $q_1$ at the first harmonic. Again, the results from numerical continuation serve as the benchmark solution. The SSMs of order three and five and the results from the Touz{\'e}-Amabili method agree well and show the same qualitative behavior as the benchmark solution. The Neild-Wagg method  incorrectly predicts hardening  behavior of the backbone curve and overestimates the amplitude. The latter arises because of the treatment of the damping by the Neild-Wagg method. In their method, modal damping is  added directly to the normal form (cf.~Neild~et~al.~\cite{Neild_2ndNF_overview}) and no transfer of the linear modal damping via nonlinearities arises. Without referring to this specific method, Touz{\'e} and Amabili~\cite{touze_damped_NNM} point out this issue for another method that incorporates damping in a similar manner. The incorrect bending behavior arises due to the assumption of small nonlinearities, based on which all quadratic terms of nonlinearities are neglected for the backbone curve estimation in the Neild-Wagg method. To recover the effect of quadratic nonlinearities on the backbone curve, a higher-order extension in their perturbation approach is required, which would complicate the calculations significantly and is unavailable in the literature at this time. In summary, the small damping and small nonlinearity assumption made in the Neild-Wagg method is, therefore, not justified for this example.

\begin{figure}[ht!]
	\begin{center}
			\input{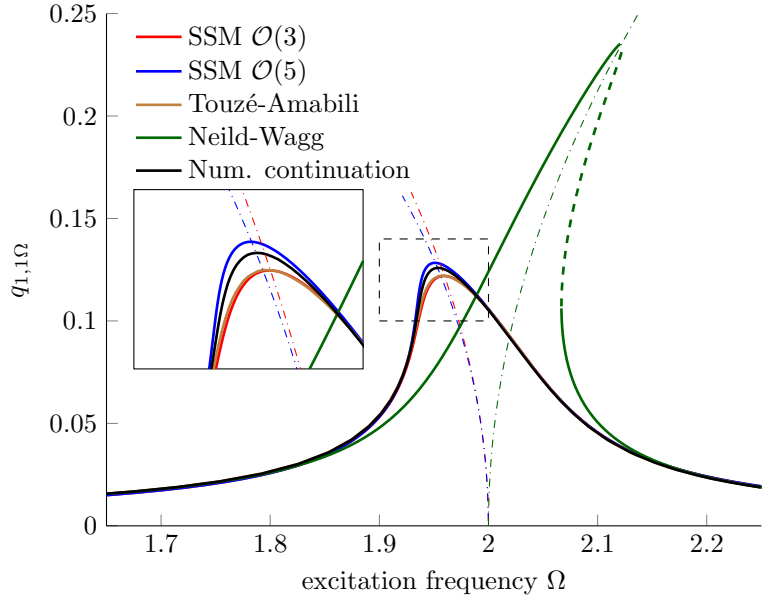}
	\end{center}
	\caption{ Amplitude of the first harmonic of the displacement $q_1$ (thick lines) and backbone curves (thin dash-dotted lines) ; solid lines imply stability and dashed lines instability of the forced response; parameters:  $\omega_1\!=\!2$, $\omega_2\!=\!4.5$, $D_1\!=\!0.01$, $D_2\!=\!0.2$ and $f_1=0.02$.}
	\label{fig:Spring_sys_plot}
\end{figure}

Enlarging the frequency response curve for high amplitudes (inset in Fig.~\ref{fig:Spring_sys_plot}), we observe that the $\mathcal{O}(5)$ SSM construction shows the highest accuracy again. The results from the $\mathcal{O}(3)$ SSM and those from the Touz{\'e}-Amabili method almost coincide.

\subsection{Oscillator chain}

As an application to a higher-dimensional system, we extend Example 1 (cf.~Fig.~\ref{fig:Shaw_Pierre_ex}) into a chain of coupled oscillators. The first and the last mass are suspended to the wall, as shown in Fig.~\ref{fig:Chain}. We assume a cubic nonlinearity for the spring suspending the first mass to the wall. 

\begin{figure}[hb!]
	\begin{center}
		
		\begin{tikzpicture}
		
		\tikzstyle{spring}=[thick,decorate,decoration={zigzag,pre length=0.3cm,post
			length=0.3cm,segment length=6}]
		\tikzstyle{damper}=[thick,decoration={markings,  
			mark connection node=dmp,
			mark=at position 0.45 with 
			{
				\node (dmp) [thick,inner sep=0pt,transform shape,rotate=-90,minimum
				width=8pt,minimum height=3pt,draw=none] {};
				\draw [thick] ($(dmp.north east)+(5pt,0)$) -- (dmp.south east) -- (dmp.south
				west) -- ($(dmp.north west)+(5pt,0)$);
				\draw [thick] ($(dmp.north)+(0,-3pt)$) -- ($(dmp.north)+(0,3pt)$);
			}
		}, decorate]

		
		\node[pattern=north east lines, pattern color=black] at (-2.375,-0.125) (LW) [minimum width=0.25cm,  minimum height=1.75cm] {};
		
		\draw[spring] ($(LW.east) - (0,0.125)$) -- ($(M1.west) - (0,0.25)$) node [midway,below] {$k,\kappa$};
		\draw[damper] ($(LW.east) + (0,0.375)$) -- ($(M1.west) + (0,0.25)$)
		node [midway,above] {\raisebox{0.1cm}{$c$}};
		\draw[thick,->] (-1.95,-0.4)-- (-1.15,-0.05) ;
		
		\node[draw,outer sep=0pt,thick] (M1) [minimum width=1.5cm, minimum height=1cm] {} node[above]{$\qquad m$};
		\draw[thick] ($(M1.south) + (-0.5,-0.125)$) circle (0.125);
		\draw[thick] ($(M1.south) + (0.5,-0.125)$) circle (0.125);
		\draw[-triangle 45] ($(M1) - (0.5,0)$) -- ($(M1) + (0.5,0)$)  node[midway,below]{$f_1$};
		\draw[-latex] ($(M1.north) + (-0.5,0.1)$) -- ($(M1.north) + (0.5,0.1)$) node[midway,above]{$q_1$};		
		
		\coordinate (LBs) at (2.25,-0.25);	
		\coordinate (LBd) at (2.45,0.25);	
		\draw[black,line width=0.3mm] plot [smooth] coordinates {(2.35,-0.6) (LBs) (LBd)  (2.35,0.6)} ;
		\draw[spring] ($(M1.east) - (0,0.25)$) -- (LBs) 
		node [midway,below] {$k$};
		
		\draw[damper] ($(M1.east) + (0,0.25)$) -- (LBd)
		node [midway,above] {\raisebox{0.1cm}{$c$}};
		
		\node[above](dots) at ($(LBs) + (0.3,0.125)$) {...};
		
		\coordinate (RBs) at (2.65,-0.25);	
		\coordinate (RBd) at (2.85,0.25);	
		\draw[black,line width=0.3mm] plot [smooth] coordinates {(2.75,-0.6) (RBs) (RBd)  (2.75,0.6)} ;

		\node[draw,outer sep=0pt,thick] (M2) at (4.9,0) [minimum width=1.5cm, minimum height=1cm] {} node[above] at (4.9,0){$\qquad m$};

		\draw[spring] (RBs)--($(M2.west) - (0,0.25)$)   
		node [midway,below] {$k$};
		\draw[damper] (RBd) -- ($(M2.west) + (0,0.25)$) 
		node [midway,above] {\raisebox{0.1cm}{$c$}};

		\draw[thick] ($(M2.south) + (-0.5,-0.125)$) circle (0.125);
		\draw[thick] ($(M2.south) + (0.5,-0.125)$) circle (0.125);
		\draw[-triangle 45] ($(M2) - (0.5,0)$) -- ($(M2) + (0.5,0)$)  node[midway,below]{$f_N$};	
		\draw[-latex] ($(M2.north) + (-0.5,0.1)$) -- ($(M2.north) + (0.5,0.1)$) node[midway,above]{$q_N$};
		
		\node[pattern=north east lines, pattern color=black] at (7.275,-0.125) (RW) [minimum width=0.25cm,  minimum height=1.75cm] {};
		
		\draw[spring] ($(M2.east) - (0,0.25)$) -- ($(RW.west) - (0,0.125)$) node [midway,below] {$k$};
		\draw[damper] ($(M2.east) + (0,0.25)$) -- ($(RW.west) + (0,0.375)$)
		node [midway,above] {\raisebox{0.1cm}{$c$}};

		\draw (-2.25,0.75)--(-2.25,-0.75)--(7.15,-0.75)--(7.15,0.75);

		\node[pattern=north east lines, pattern color=black] at (2.45,-0.875) (BW) [minimum width=9.4cm,  minimum height=0.25cm] {};

		\end{tikzpicture}
		
	\end{center}
	\caption{The oscillator chain in Example 3.}
	\label{fig:Chain}
\end{figure}
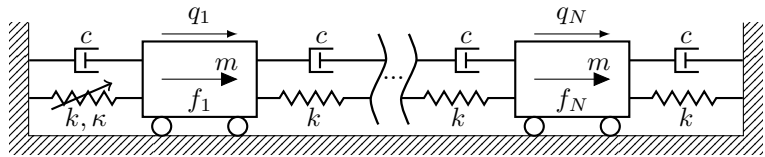

The equation of motion of the $j$-th oscillator, pictured in Fig.~\ref{fig:Chain}, is 

\begin{equation}
m\ddot{q}_j+c(2\dot{q}_{j}-\dot{q}_{j+1}-\dot{q}_{j-1})+k(2q_{j}-q_{j+1}-q_{j-1})+\delta_{1j}\kappa q_{j}^3=f_j(t),
\qquad
j=1,...,N.
\label{eq:eqm_chain}
\end{equation}

\noindent We consider the configuration $m\!=\!1$, $c\!=\!0.005$, $k\!=\!1$ and $\kappa\!=\!0.5$, with the number of degrees of freedom set to $N\!=\!5$. For this choice of the parameters, the natural frequencies and modal damping values are  
\begin{equation}
\begin{array}{l| c c c c c }
j & 1 & 2 & 3 & 4 & 5\\
\hline
\omega_j^2 & 2-\sqrt{3} & 1 & 2 & 3 &  2+\sqrt{3} 
\end{array},
\qquad
D_j=\frac{c}{2},
\qquad
j=1,...,N.
\end{equation}

The non-resonance conditions \eqref{eq:nonres_cond} and \eqref{eq:ext_nearres_cond} are satisfied for $l\!=\!1$ and hence the  two-dimensional time-varying SSM exists. We assume forcing at the first mass~ ($f_1=\varepsilon\cos(\Omega t)$ and  $f_j=0~~j=2,...,N$). The frequency is chosen to be close to the lowest eigenfrequency ($\omega_1\approx0.518$) and the amplitude is set to be $\varepsilon\!=\!0.004$. The amplitudes of the first harmonic of the fifth coordinate $q_{5,1\Omega}$ are plotted in Fig.~\ref{fig:Spring_sys_plot}.

\begin{figure}[hb!]
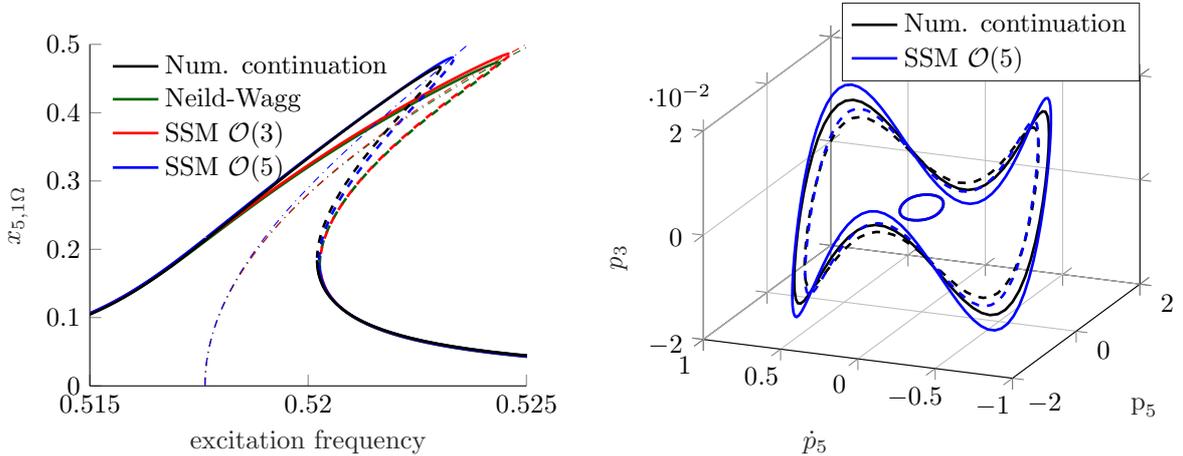

	\begin{center}
		\subcaptionbox{Amplitude of the first harmonic of the displacement of the fifth mass and backbone curvesfor forcing frequencies in the vicinity of the lowest eigenfrequency ($\omega_1 \approx 0.518$).
			\label{fig:Chain_N5_harm1}}%
		[0.48\textwidth]{
			\input{Pics/Chain_N5_X5_harm1.tex}
		}
		\hspace{0.2cm}		 
		\subcaptionbox{Periodic orbits (two stable, one unstable) for the excitation frequency $\Omega\!=\!0.522$, projected in the modal subspace spanned by $p_5$, $\dot{p}_5$ and $p_3$ 
			\label{fig:Chain_N5_orbits}}%
		[0.48\textwidth]{
				\input{Pics/Chain_N5_orbits.tex}
		}		 
	\end{center}
	\caption{Results for Example 3 with $N\!=\!5$ degrees of freedom;  solid lines imply stability and dashed lines instability of the forced response; parameters: $m\!=\!1$, $k\!=\!1$, $c\!=\!0.005$, $\kappa\!=\!0.5$, $f_j=\delta_{1j}\varepsilon\cos(\Omega t)$ and $\varepsilon\!=\!0.004$. }
	\label{fig:Chain_N5_orbit}
\end{figure}

Since the forcing is not aligned with a set of modal coordinates the unmodified Touz{\'e}-Amabili method is inapplicable in this example, and hence will be omitted in our comparison. Again, the $\mathcal{O}(5)$ SSM matches most accurately with the benchmark solution, obtained via numerical continuation. The frequency response curves from the SSM $\mathcal{O}(3)$ and the Neild-Wagg method deviate from the benchmark solution. 

Even specific orbits computed from the analytic SSM expression match closely with the orbits obtained by numerical continuation (cf. Fig. \ref{fig:Chain_N5_orbits}).

\section{Conclusion}

We have derived highly accurate analytic expressions for the forced response and backbone curves of damped and forced nonlinear mechanical systems of arbitrary dimension. Our procedure constructs an approximation for the two-dimensional, non-autonomous spectral submanifolds (SSMs) that act as nonlinear continuations of modal subspaces of the linearized system.  The existence, uniqueness and smoothness of the SSMs are guaranteed under low-order non-resonance condition on the eigenvalues of the linearization (cf. Haller and Ponsioen~\cite{article:Haller_SSMbasics}). We establish that a given autonomous (time independent) SSM can be continued for the externally forced system, unless the forcing is in resonance with an imaginary part of an eigenvalue of the linearized system.

For backbone curve calculations, we focus on such resonant external forcing and construct a two-dimensional non-autonomous SSM. Constructing the SSM via the parametrization method (cf.~Cabre et al.~\cite{Cabre_para}), the reduced dynamics is simplified significantly. Due to this simplification, we are able to derive a polynomial expression whose zeros determine the amplitudes of the forced response. Furthermore, we calculate backbone curves, stability regions and discover a symmetry of the forced response. 

With our analytical treatment, we confirm the $90$-degree phase lag criterion of the response at the backbone from Peeters et al.~\cite{peeters_backbone}  for any damping that is a polynomial function of the velocities and positions. Furthermore, we connect the backbone curve, directly computed from the forced response, to the frequency-amplitude relationship of the conservative unforced limit of the system.    

We demonstrate the performance of our results on three numerical examples. Comparing with the Neild-Wagg method~\cite{Neild_2NF_firstresult},  the Touz{\'e}-Amabili method~\cite{touze_damped_NNM} and numerical continuation, we conclude an overall superior performance for the $\mathcal{O}(5)$ SSM approach. While our method is applicable to general single harmonic external forcing, a model-order reduction with the Touz{\'e}-Amabili method is only achievable for forcing along a modal direction. Investigating Example 2, where the nonlinearities are of quadratic and cubic form, we discover incorrect predictions from the Neild-Wagg method.

Ponsioen et al.~\cite{Sten_auto_SSM} describe an automated computational algorithm to approximate two-dimensional SSMs of nonlinear mechanical systems up to arbitrary order. To increase the precision of our results further, these high-order approximations, can be coupled with the results of this article, which is our ongoing effort.

We  have limited our discussion to two-dimensional SSMs. For multi-frequency forcing in resonance with multiple eigenvalues of the linearized system, or for structures whose eigenfrequencies are integer multiples of each other (i.e., eq.~\eqref{eq:ext_nearres_cond} is violated), a reduction to a higher-dimensional SSM is necessary. Since  the theory developed by Cabre et al.~\cite{Cabre_para} and Haro and de la Llave~\cite{HARO_SSM} applies to higher-dimensional submanifolds, our calculations can be extended to the multi-frequency setting.

\section{Acknowledgments}

We are grateful to Florian Kogelbauer for helpful discussions and for pointing out an error in an earlier stage of this work. We are also thankful to Cyril Touz{\'e} for helpful suggestions and clarifications on his method.

\setcounter{section}{0}
\renewcommand\thesection{\Alph{section}}
\renewcommand{\thesubsection}{\thesection\arabic{subsection}}
\section{Derivations}

\subsection{Derivations for the SSMs of the forced system}
\label{app:der_res1}
The results stated in  \eqref{eq:w1_harm} and \eqref{eq:r1_harm} follow from the work of Haller and Ponsionen~\cite{article:Haller_SSMbasics} or Haro et al.~\cite{HARO_SSM}. Both results are generally applicable to system~\eqref{eq:sys_phys}, since the trivial fixed point of the unforced equation \eqref{eq:sys_phys} is hyperbolic (cf. conditions~\eqref{eq:cond_eigval}). Since we assume that the non-resonance conditions~\eqref{eq:nonres_cond} hold, an SSM associated with the modal subspace~$E$ uniquely exists and persists for small~$\varepsilon\! \geq\! 0$. Haro and de la Llave~\cite{HARO_SSM} formulate their main results for discrete mappings, but also show the direct applicability of their results to flow maps of continuous systems. 

To calculate a parameterization $\mathbf{W}(\mathbf{z},\phi)$ and the associated reduced dynamics $\mathbf{R}(\mathbf{z},\phi)$, we start from the invariance condition~\eqref{eq:inv_cond_SSM} in which we substitue the series expansion~\eqref{eq:expans_SSM+dyn}. With the notation~\eqref{eq:multiindex_exp}, the series expansion~\eqref{eq:expans_SSM+dyn} can be rewritten as 
	\begin{equation}
	\begin{split}
	\mathbf{W}(\mathbf{z},\mathbf{\phi})&=
	\mathbf{W}_0(\mathbf{z})+\varepsilon \mathbf{W}_1(\mathbf{z},\mathbf{\phi}) 
	+\mathcal{O}(\varepsilon^2)
	=
	\mathbf{W}_0(\mathbf{z})+\varepsilon \mathbf{w}^{\mathbf{0}}_1(\mathbf{\phi}) 
	+\mathcal{O}(\varepsilon|\mathbf{z}|,\varepsilon^2),
	\\
	\mathbf{R}(\mathbf{z},\mathbf{\phi})&=
	\mathbf{R}_0(\mathbf{z})+\varepsilon \mathbf{R}_1(\mathbf{z},\mathbf{\phi}) 
	+\mathcal{O}(\varepsilon^2)
	=
	\mathbf{R}_0(\mathbf{z})+\varepsilon \mathbf{r}^{\mathbf{0}}(\mathbf{\phi}) 
	+\mathcal{O}(\varepsilon|\mathbf{z}|,\varepsilon^2). 
	\end{split}
	\label{eq:expans_SSM+dyn_detail}
	\end{equation}
	Substituting the expansion~\eqref{eq:expans_SSM+dyn_detail} in the invariance condition~\eqref{eq:inv_cond_SSM}, we obtain

\begin{equation}
\begin{split}
&\mathbf{A}
(\mathbf{W}_0(\mathbf{z})+\varepsilon \mathbf{W}_1(\mathbf{z},\mathbf{\phi}))
+
\mathbf{G}_{nlin}(\mathbf{W}_0(\mathbf{z})+\varepsilon \mathbf{W}_1(\mathbf{z},\mathbf{\phi}))+
\varepsilon\mathbf{G}_{ext}(\mathbf{\phi})\\
&=
\frac{\partial \mathbf{W}_0(\mathbf{z}) }{\partial \mathbf{z}}\mathbf{R}_0(\mathbf{z})
+\varepsilon
\left(
\frac{\partial \mathbf{W}_1(\mathbf{z},\mathbf{\phi}) }{\partial \mathbf{z}}\mathbf{R}_0(\mathbf{z})
+
\frac{\partial \mathbf{W}_0(\mathbf{z}) }{\partial \mathbf{z}}\mathbf{R}_1(\mathbf{z},\mathbf{\phi})
+
\frac{\partial \mathbf{W}_1(\mathbf{z}) }{\partial \mathbf{\phi}}\Omega
\right)+\mathcal{O}(\varepsilon^2).
\end{split}
\label{eq:inv_cond_exp}
\end{equation}

\noindent Comparing equal orders of $\varepsilon$, we find, that the zeroth order part of \eqref{eq:inv_cond_exp} does not contain forcing terms

\begin{equation}
\mathcal{O}(\varepsilon^0):
\qquad
\mathbf{A}\mathbf{ W_0}+\mathbf{G}_{nlin}(\mathbf{W_0})
=
\frac{\partial \mathbf{W}_0}{\partial  \mathbf{z}}\mathbf{R}_0.
\label{eq:inv_first_order}
\end{equation}

Since the non-resonance conditions~\eqref{eq:nonres_cond} are satisfied, one can solve for the unknown polynomial coefficients of $\mathbf{W}_0$ and $\mathbf{R}_0$ (cf.~Haro and de la Llave.~\cite{HARO_SSM}). The fixed point is at the origin, therefore 
\begin{equation}
\mathbf{w}_0^{\mathbf{0}}=0,\qquad  \mathbf{r}_0^{\mathbf{0}}=0,
\label{eq:ROW}
\end{equation} 
holds.

Now, we consider the first-order terms in epsilon and the zeroth-order in $\mathbf{z}$ in eq.~\eqref{eq:inv_cond_exp}. At this order, no terms from the nonlinearity  $\mathbf{G}_{nlin}$ arise, since it is at least quadratic in its arguments. The relevant terms at this order from the right hand side of eq.~\eqref{eq:inv_cond_exp}  $\mathbf{W}_0$ and  $\mathbf{R}_0$ are zero (cf. eq.~\eqref{eq:ROW}), while the terms from  $\mathbf{W}_1$ and $\mathbf{R}_1$ remain

\begin{equation}
\mathcal{O}(\varepsilon,|\mathbf{z}|^0):
\qquad
\mathbf{A}\mathbf{w}_1^{\mathbf{0}}(\phi)
+\mathbf{G}_{ext} (\phi)
=
\frac{\partial \mathbf{W}_0}{\partial  \mathbf{z}}\mathbf{r}_1^{\mathbf{0}}(\phi)
+\frac{\partial \mathbf{w}_1^{\mathbf{0}}}{\partial  \mathbf{\phi}}\Omega.
\label{eq:inv_cond_e1z0}
\end{equation}

Setting $\mathbf{r}_1^{\mathbf{0}}(\phi)=0$, we find that the periodic solutions of \eqref{eq:inv_cond_e1z0} can be obtained from Dunhamel's principle as

\begin{equation}
\mathbf{w}_{1}^{\mathbf{0}}=\int_{0}^t e^{\mathbf{A}(t-s)}\mathbf{G}_{ext}(s)ds,
\label{eq:dunhamel_sol}
\end{equation}

\noindent where we use the time evolution~\eqref{eq:angles} of the angles. The integral in~\eqref{eq:dunhamel_sol} can be evaluated numerically or solved analytically. In our setting, the external forcing can be expanded in a Fourier series leading to the expression~\eqref{eq:w1_harm}.

Again, the existence and smoothness properties stated by Haro and de la Llave~\cite{HARO_SSM} ensure the solvability of the higher-order terms in $\mathbf{z}$ and $\varepsilon$ in~\eqref{eq:inv_cond_exp}.

\subsection{SSM for the near-resonant forced mechanical system}
\label{app:der_res2}

To prove Theorem~\ref{Thm:auto_SSM} we explicitly calculate $\mathbf{W}(\mathbf{z},\phi)$ and $\mathbf{R}(\mathbf{z},\phi)$ up to the required order of accuracy $\mathcal{O}(\mu^{2M+3})$.  We first identify the relevant terms to be calculated by applying the rescaling~\eqref{eq:rescaling} to the series expansion~\eqref{eq:expans_SSM+dyn}: 
\begin{equation}
\begin{split}
\mathbf{W}(\mathbf{z},\phi)&=\mathbf{W}_0(\mathbf{z})+\mathcal{O}(\mathbf{z}^{2M+3})+
\varepsilon
\left(
\mathbf{w}_1^0(\phi)+\mathcal{O}(\mathbf{z})
\right)
+\mathcal{O}(\varepsilon^2)=
\mathbf{W}_0(\mathbf{z})+\mu^{2M+2} \mathbf{w}_1^0(\phi)+\mathcal{O}(\mu^{2M+3}),
\\
\mathbf{R}(\mathbf{z},\phi)&=\mathbf{R}_0(\mathbf{z})+\mathcal{O}(\mathbf{z}^{2M+3})+\varepsilon \left( \mathbf{r}_1^0(\phi)+ \mathcal{O}(\mathbf{z})\right) +\mathcal{O}(\varepsilon^2)=
\mathbf{R}_0(\mathbf{z})+ \mu^{2M+2} \mathbf{r}_1^0(\phi)+\mathcal{O}(\mu^{2M+3}).
\end{split}
\label{eq:rescaled_SSM}
\end{equation}

The first error term ($\mathcal{O}(\mathbf{z}^{2M+3})$) results from the order-$(2M\!+\!1)$ truncation of autonomous SSM (cf.~eq.~\eqref{eq:R0}). The last error term arises from the truncated expansion of the SSM at the order two in $\varepsilon$ in the expansion~\eqref{eq:expans_SSM+dyn}. The error term $\varepsilon\mathcal{O}(\mathbf{z})$ arises from the truncation of $\mathbf{W}_1(\mathbf{z},\phi)$ at the zeroth order in $\mathbf{z}$. Rewriting the invariance condition~\eqref{eq:expans_SSM+dyn} in terms of the new variable $\mu$, we obtain

\begin{equation}
\begin{split}
&\mathbf{A} 
(\mathbf{W}_0(\mathbf{z})+\mu^{2M+2} \mathbf{w}_1^0(\mathbf{\phi}))
+
\mathbf{G}_{nlin}(\mathbf{W}_0(\mathbf{z}))+
\mu^{2M+2} \mathbf{G}_{ext}(\mathbf{\phi})\\
&=
\frac{\partial \mathbf{W}_0(\mathbf{z}) }{\partial \mathbf{z}}\mathbf{R}_0(\mathbf{z})
+\mu^{2M+2}
\left[
\mathbf{w}^{(1,0)}_0,
\mathbf{w}^{(0,1)}_0
\right]\mathbf{R}_1(\mathbf{z},\mathbf{\phi})
+
\mu^{2M+2}\frac{\partial \mathbf{w}_1^0(\phi) }{\partial \mathbf{\phi}}\Omega
+\mathcal{O}(\mu^{2M+3}).
\end{split}
\label{eq:inv_cond_mu}
\end{equation} 

The time-varying terms in eq.~\eqref{eq:inv_cond_mu} are of order $\mathcal{O}(\mu^{2M+2})$. Since the autonomous dynamics $\mathbf{R}_0(\mathbf{z})$ (cf.~eq.~\eqref{eq:R0}) and the corresponding $\mathbf{W}_0(\mathbf{z})$ are of lower order in $\mu$, the formulas from Szalai~et~al.~\cite{article:SSM_SysID}, which are derived for the autonomous case, apply here as well. As those formulas for $\mathbf{R}_0$ are only applicable when the linear part of~\eqref{eq:system_ss} is diagonalized, we first perform a change of coordinates~\mbox{$\mathbf{w}_{1}^0(\phi)\!=\!\mathbf{V}\mathbf{v}(\phi)$} and left-multiply~\eqref{eq:inv_cond_mu}  $\mathbf{V}^{-1}$  to obtain

\begin{equation}
\mathcal{O}(\mu^{2M+2}): \qquad 
\mathbf{V}^{-1} \mathbf{A}\mathbf{V}\mathbf{v}
+\mathbf{V}^{-1}
\begin{bmatrix}
0\\
\mathbf{M}^{-1} \mathbf{f}
\end{bmatrix}
(e^{i \Omega t}+e^{-i \Omega t})
=
\dot{\mathbf{v}}(\phi t)+
\begin{bmatrix}
\delta_{1 l} & \delta_{1 (l+N)}\\ 
\delta_{2 l} & \delta_{2 (l+N)}\\
\vdots & \vdots\\
\delta_{(2N) l} & \delta_{2N (l+N)}
\end{bmatrix}
\mathbf{R}_1(\phi),
\label{eq:first_order_solve_diag}
\end{equation}

\noindent where the specific terms  for $\mathbf{w}^{(1,0)}_1$ and  $\mathbf{w}^{(0,1)}_1$ are taken from~Szalai~et~al.~\cite{article:SSM_SysID}. The matrix $\mathbf{V}^{-1} \mathbf{A}\mathbf{V}$ is diagonal, containing the eigenvalues $\lambda_j$. Furthermore, we have also inserted the canonical forcing~\eqref{eq:forcing} in the eq.~\eqref{eq:first_order_solve_diag}. For the $n$-th coordinate eq.~\eqref{eq:first_order_solve_diag}, gives

\begin{equation}
\lambda_n v_{n}
+p_{n}(e^{i \Omega t}+e^{-i \Omega t})
=
\dot{v}_{n}+\delta_{n l} r_c e^{i \Omega t}+\delta_{n (l+N)} \bar{r}_c e^{-i \Omega t},
\label{eq:first_order_solve_n}
\end{equation}

\noindent where the complex amplitude $p_n$ of the forcing is defined as

\begin{equation}
p_{n}= \mathbf{t}_n\begin{bmatrix}
0\\
\mathbf{M}^{-1}\mathbf{f}
\end{bmatrix}.
\label{eq:Red_dyn_forcing_general}
\end{equation}

We observe, that eq.~\eqref{eq:first_order_solve_n} is a linear ordinary differential equation for the unkown coefficients~$v_n$. Therefore, the periodic solutions of \eqref{eq:first_order_solve_n} can be obtained as in the previous section from the Dunhamel's principle. Due to the Kronecker delta in eq.~\eqref{eq:first_order_solve_n}, three cases arise:

\begin{subequations}
	\begin{align}
	v_{l}&=\frac{p_l-r}{i\Omega-\lambda_l}e^{i\Omega t}+\frac{p_l}{-i\Omega-\lambda_l}e^{-i\Omega t},
	\label{eq:modal_solve_l}
	\\
	v_{l+N}&=\frac{\overline{p}_l}{i\Omega-\overline{\lambda}_l}e^{i\Omega t}+\frac{\overline{p}_l-\overline{r}}{-i\Omega-\overline{\lambda}_l}e^{-i\Omega t},
	\label{eq:modal_solve_lN}
	\\
	v_{n}&=\frac{p_n}{i\Omega-\lambda_n}e^{i\Omega t}+\frac{p_n}{-i\Omega-\lambda_n}e^{-i\Omega t}, \qquad n \neq l,l+N.  
	\label{eq:modal_solve_rest}
	\end{align}
\end{subequations}

The amplitude $p_l$  coincides with $r$ (cf. eq.~\eqref{eq:Red_dyn_forcing} and \eqref{eq:Red_dyn_forcing_general}). Therefore, the first term in~\eqref{eq:modal_solve_l} and the second term in~\eqref{eq:modal_solve_lN} vanish. These are the terms, that would create small denominators in the parameterization of the SSM ($\mathbf{w}_1^0$). The choice of the coefficient $\mathbf{r}^0_1$ in~\eqref{eq:Red_dyn_forcing} eliminates these small denominators. Because we assume that the near-resonance conditions~\eqref{eq:ext_nearres_cond} are satisfied, small  denominators in~\eqref{eq:modal_solve_rest} do not arise, unless the forcing frequency is close to the imaginary part of another eigenvalue different from $\lambda_l$ and $\overline{\lambda}_l$. In this case however, the SSM needs to be constructed tangent to this specific subspace.  

Changing back from the diagonal form to the physical coordinates, we recover  $\mathbf{w}_1^0(\phi)$ from~\eqref{eq:SSM_nearres}.

\subsection{The choice of the eigenvectors}
\label{sec:eig_choice}
In the following, we show that the eigenvectors $\mathbf{v}_j$ can be normed such that the constant $r_c$ arising in the reduced dynamics~\eqref{eq:Red_dyn_forcing} is purely imaginary. First, note that for a general choice of eigenvectors, $r_c$ is complex, i.e., can be expressed as 
\begin{equation}
r_{c}=ire^{i\varphi}= \mathbf{t}_l\begin{bmatrix}
0\\
\mathbf{M}^{-1}\mathbf{f}
\end{bmatrix},	
\end{equation} 
and $r_c$ is not purely imaginary for $\varphi\! \neq\! \pm n \pi$. Multiplying  $\mathbf{V}$ with $e^{i\varphi}$ and realizing that the vector $\mathbf{t}_l$ is the $l$-th row of $\mathbf{V}^{-1}$ (cf.~eq.~\eqref{eq:inverse_mat}), we obtain 
\begin{equation}
e^{-i\varphi}\mathbf{t}_l\begin{bmatrix}
0\\
\mathbf{M}^{-1}\mathbf{f}
\end{bmatrix}=e^{-i\varphi}ire^{i\varphi}=ir,	
\end{equation} 
and $r_c$ is purely imaginary holds for $\widetilde{\mathbf{V}}=e^{i\varphi}\mathbf{V}$. This proofs Corollary~\ref{cor:eig_vecs}. 

Following the proof of Corollary~\ref{cor:eig_vecs}, we show that in case of purely symmetric system martices ($\mathbf{N}=\mathbf{0}$ and $\mathbf{G}=\mathbf{0}$) and structural damping, the constant $r_c$ is purely imaginary, if we mass normalize the mode shapes $\mathbf{e}_j$. With the real mode shape matrix $E$ and the notation~\eqref{eq:mode_shp}, the matrix of eigenvectors $\mathbf{V}$ is given by 
\begin{equation}
\mathbf{V}=
\begin{bmatrix}
\mathbf{E} & \mathbf{E} \\
\mathbf{E} \mathbf{\Lambda} & \mathbf{E} \overline{\mathbf{\Lambda}}
\end{bmatrix}.
\label{eq:V_structural_damp}
\end{equation}
To compute the constant $r_c$ explicitly, we need to compute the inverse of the martix $\mathbf{V}$~(cf.~eq.~\eqref{eq:Red_dyn_forcing}).  By blockwise inversion of~\eqref{eq:V_structural_damp}, we obtain
\begin{equation}
\mathbf{V}^{-1}=
\begin{bmatrix}
\mathbf{E}^{-1}+ (\overline{\mathbf{\Lambda}}-\mathbf{\Lambda})^{-1}\mathbf{E}^{-1}\mathbf{\Lambda}\mathbf{E}^{-1} & -(\overline{\mathbf{\Lambda}}-\mathbf{\Lambda})^{-1}\mathbf{E}^{-1}\\
(\overline{\mathbf{\Lambda}}-\mathbf{\Lambda})^{-1}\mathbf{E}^{-1}\mathbf{\Lambda}\mathbf{E}^{-1}& (\overline{\mathbf{\Lambda}}-\mathbf{\Lambda})^{-1}\mathbf{E}^{-1}\\
\end{bmatrix}.
\label{eq:inv_Vss}
\end{equation}
Since the mode-shape-matrix $\mathbf{E}$ and its inverse are real,  the last $N$ columns of~\eqref{eq:inv_Vss} are purely imaginary. Therefore, a multiplication by the forcing in eq.~\eqref{eq:Red_dyn_forcing} will always lead to a purely imaginary $r_c$.

\subsection{Amplitude, phase shift and stability of the $T$-periodic orbits of the reduced dynamics}
\label{app:der_res3}

In the following, we compute periodic orbits with the same period as the forcing of the reduced dynamics~\eqref{eq:Red_dyn_forcing}. These orbits will determine the steady-state response of the system~\eqref{eq:sys_phys}. First, we transform the reduced dynamics~\eqref{eq:dynamics_R_nearres} into the polar coordinates~\eqref{eq:pol_trans}, which yields 
\begin{equation}
\begin{split}
\dot{\rho}&=Re(\lambda_l)\rho+\sum_{m=1}^M Re(\beta_m)\rho^{2m+1}+
\varepsilon 
r \sin(\theta-\phi),
\\
\dot{\theta}&=Im(\lambda_l)+\sum_{m=1}^M \mbox{Im}(\beta_m)\rho^{2m}+
\frac{\varepsilon}{\rho} 
\left( 
r\cos(\theta-\phi)
\right),
\\
\dot{\phi}&=\Omega.
\end{split}
\label{eq:dynamics_poolar}
\end{equation}
With the change of coordinates $\psi\!=\!\theta\!-\!\phi$, the dynamics \eqref{eq:dynamics_poolar} can be rewritten as
\begin{align}
\dot{\rho}&=\mbox{Re}(\lambda_l)\rho+\sum_{m=1}^M \mbox{Re}(\beta_m)\rho^{2m+1}
+\varepsilon 
r \sin(\psi) 
=
a(\rho)+
\varepsilon r \sin(\psi),
\label{eq:rho_eq}\\
\dot{\psi}&=\mbox{Im}(\lambda_l)+\sum_{m=1}^M \mbox{Im}(\beta_m)\rho^{2m}+\frac{\varepsilon}{\rho} 
\left( 
r\cos(\psi)
\right)-\Omega
=b(\rho)+\frac{\varepsilon}{\rho} 
\left( 
r\cos(\psi)
\right)-\Omega,
\label{eq:phase_shift}
\\
\dot{\phi}&=\Omega.
\end{align}
The angle $\psi$ represents the phase shift between the forcing and the system response. At the steady state of~\eqref{eq:system_ss}, the amplitude $\rho$, as well as the phase shift $\psi$ are constant. The trigonometric functions in \eqref{eq:rho_eq} and \eqref{eq:phase_shift} can be eliminated by solving \eqref{eq:rho_eq} for $\sin(\psi)$ and \eqref{eq:phase_shift} for $\cos(\psi)$ and adding the square of both equations. Thereby we obtain eq.~\eqref{eq:ploy2solvem}. We determine the phase relation~\eqref{eq:phase_ss}  by solving the steady state response in of eq.~\eqref{eq:phase_shift} for $\psi$. The stability of such solutions can be obtained by evaluating the eigenvalues of the Jacobian of \eqref{eq:rho_eq} and \eqref{eq:phase_shift} with respect to $\rho$ and $\psi$, which we state in~\eqref{eq:jacobian}.

\section{Coefficients for the autonomous SSM and the reduced dynamics}
\label{app:Coeff_Szalai}
We recall here for completeness the parameterization of the autonomous SSM ($\mathbf{W}_0(\mathbf{z})$) and the reduced dynamics ($\mathbf{R}_0(\mathbf{z})$) from Szalai~et~al.~\cite{article:SSM_SysID}. To diagonalize the linear part, we apply the transformation
\begin{equation}
\mathbf{y}= \mathbf{V}\mathbf{x}
\end{equation}
to the  autonomous limit ($\varepsilon\!\rightarrow\!0$) of system~\eqref{eq:system_ss} and obtain
\begin{equation}
\dot{\mathbf{y}}
=	
\mathbf{V}^{-1}\mathbf{A}  \mathbf{V} \mathbf{y}
+
\mathbf{V}^{-1}\mathbf{G}_{nlin}( \mathbf{V}\mathbf{y})
=
\mathbf{ \Lambda}\mathbf{y}+\mathbf{G}(\mathbf{y}).
\label{eq:diag_sys}
\end{equation}
The Taylor series of the $j$-th entry of nonlinear terms $\mathbf{G}$ is
\begin{equation}
G_j=\sum_{\mathbf{m}\in \mathbb{N}^{2N}_0}g_j^{\mathbf{m}}\mathbf{y}^{\mathbf{m}}.
\end{equation}
As Szalai~et~al.~\cite{article:SSM_SysID}, we use $(p@j)$ to denote an integer multi-index whose elements are zero, except for the index at the $j$-th position, which is equal to p, i.e.
\begin{equation}
(p@j):=\left(0,...,\underset{j-1}{0},\underset{j}{p},\underset{j+1}{0},....,0\right) \in \mathbb{N}^{2N} .
\end{equation}
We also use this notation to refer to multi-indices with multiple entries $(p@j_1,q@j_2)$ and in case of $j_1\!=\!j_2$ the corresponding entry is $p+q$, i.e.
\begin{equation*}
\begin{split}
(p@j_1,q@j_2)&:=\left(0,...,\underset{j_1-1}{0},\underset{j_1}{p},\underset{j_1+1}{0},....,
,\underset{j_2-1}{0},\underset{j_2}{q},\underset{j_2+1}{0},...,0\right), \\
 (p@j,q@j)&:=\left(0,...,\underset{j-1}{0},\underset{j}{p+q},\underset{j+1}{0},...,0\right).
\end{split}
\end{equation*}
With this notation the coefficients of the parameterization $\mathbf{W}_0(\mathbf{z})$ for $j=1,...,2N$ are given by

\[\arraycolsep=1.4pt\def\arraystretch{2.7}
\begin{array}{c}
\displaystyle
w_j^{(1,0)}=\delta_{jl}, \qquad w_j^{(1,0)}=\delta_{j(l+N)},
\\
\displaystyle
w_j^{(2,0)}=\frac{g_j^{(2@l)}}{2\lambda_l-\lambda_j},
\qquad 
w_j^{(1,1)}=\frac{g_j^{(1@l,1@(l+N))}}{\lambda_l+\overline{\lambda}_{l}-\lambda_j},
\qquad
w_j^{(0,2)}=\frac{g_j^{(2@(l+N))}}{2\overline{\lambda}_{l}-\lambda_j},
\\
\displaystyle
w_j^{(3,0)}=\frac{\sum_{q=1}^{2N}(1+\delta_{lq})g_j^{(1@l,1@q)}w_q^{(2,0)}+g_j^{(3@l)}}{3\lambda_l-\lambda_j},
\\
w_j^{(0,3)}=\frac{\sum_{q=1}^{2N}(1+\delta_{(l+N)q})g_j^{(1@(l+N),1@q)}w_q^{(0,2)}+g_j^{(3@(l+N))}}{3\overline{\lambda}_{l}-\lambda_j},
\\
\displaystyle
w_j^{(2,1)}=
\frac{\sum_{q=1}^{2N}(1+\delta_{lq})g_j^{(1@l,1@q)}w_q^{(1,1)}
	+\sum_{q=1}^{2N}(1+\delta_{(l+N)q})g_j^{(1@(l+N),1@q)}w_q^{(2,0)}
	+g_j^{(3@l)}}
{2\lambda_l+\overline{\lambda}_{l}-\lambda_j},
\\
\displaystyle
w_j^{(1,2)}=
\frac{\sum_{q=1}^{2N}(1+\delta_{lq})g_j^{(1@l,1@q)}w_q^{(0,2)}
	+\sum_{q=1}^{2N}(1+\delta_{(l+N)q})g_j^{(1@(l+N),1@q)}w_q^{(1,1)}
	+g_j^{(3@l)}}
{\lambda_l+2\overline{\lambda}_{l}-\lambda_j}.
\end{array}
\]

\noindent The coefficient $\beta_1$ of the reduce dynamics~\eqref{eq:R0} is given by
\begin{equation}
\beta_1=\sum_{q=1}^{2N}(1+\delta_{lq})g_j^{(1@l,1@q)}w_q^{(1,1)}
+\sum_{q=1}^{2N}(1+\delta_{(l+N)q})g_j^{(1@(l+N),1@q)}w_q^{(2,0)}
+g_j^{(3@l)}.
\label{eq:beta}
\end{equation}
To compute the $\mathcal{O}(5)$ autonomous SSM ($M\!=\!2$), we provide a \textsc{Matlab} script as electronic supplementary material.

\bibliographystyle{abbrv}	
\bibliography{Backbone_SSM_Lit}

\end{document}